\documentclass[12pt,reqno]{amsart}
\usepackage{amsmath,amssymb,amsfonts,amscd,latexsym,amsthm,mathrsfs,verbatim}
\usepackage{shuffle,yfonts}
\usepackage{graphicx}
\textheight22cm \textwidth15cm \hoffset-1.7cm \voffset-.5cm
\arraycolsep=1.25pt 
\newtheorem{lemma}{Lemma}
\newtheorem{theorem}{Theorem}

\newtheorem{proposition}{Proposition}
\theoremstyle{remark}

\newtheorem{conjecture}{\bf Conjecture}

\newtheorem*{acknowledgements}{\bf Acknowledgements}
\let\ol\overline
\let\eps\varepsilon

\let\wt\widetilde
\newcommand\tuffle{\mathop{\rotatebox[origin=c]{180}{$\shuffle$}}}
\newcommand\stuffle{\mathop{\ol{\rotatebox[origin=c]{180}{$\shuffle$}}}}

\newcommand\bzeta{\textswab{Z}}
\newcommand\BD{\mathcal{BD}}
\newcommand\MD{\mathcal{MD}}
\renewcommand{\d}{{\mathrm d}}

\newcommand{\fH}{{\mathfrak{H}}}

\newcommand\bz[1]{\bzeta\bigl[\begin{smallmatrix}#1\end{smallmatrix}\bigr]}
\DeclareMathAlphabet{\mathpzc}{U}{euf}{m}{n}
\def\zq{\mathpzc z_q}
\begin{document}

\title{Multiple $q$-zeta brackets}

\author{Wadim Zudilin}
\address{School of Mathematical and Physical Sciences, The University of Newcastle, Callaghan NSW 2308, AUSTRALIA}
\email{wzudilin@gmail.com}

\date{29 November 2014. \emph{Revised}: 12 March 2014}

\begin{abstract}
The multiple zeta values (MZVs) possess a rich algebraic structure of algebraic relations,
which is conjecturally determined by two different (shuffle and stuffle) products
of a certain algebra of noncommutative words. In a recent work, Bachmann constructed
a $q$-analogue of the MZVs\,---\,the so-called bi-brackets\,---\,for which the two products
are dual to each other, in a very natural way. We overview Bachmann's construction and
discuss the radial asymptotics of the bi-brackets, its links to the MZVs, and related linear
(in)dependence questions of the $q$-analogue.
\end{abstract}

\subjclass[2010]{11M32}

\thanks{The author is supported by Australian Research Council grant DP140101186.}

\maketitle

Apart from the `standard' $q$-model of the multiple zeta values (MZVs),
$$
\zeta_q(s_1,\dots,s_l)
:=(1-q)^{s_1+\dots+s_l}\sum_{n_1>\dots>n_l>0}\frac{q^{(s_1-1)n_1+\dots+(s_l-1)n_l}}{(1-q^{n_1})^{s_1}\dotsb(1-q^{n_l})^{s_l}},
$$
introduced in the earlier works \cite{Br05,OT07}, the different $q$-version
$$
\zq(s_1,\dots,s_l)
:=(1-q)^{s_1+\dots+s_l}\sum_{n_1>\dots>n_l>0}\frac{q^{n_1}}{(1-q^{n_1})^{s_1}\dotsb(1-q^{n_l})^{s_l}}
$$
has received a special attention in the more recent work \cite{CEM13} by
Castillo Medina, Ebrahimi-Fard and Manchon. One of the principal features of the latter $q$-MZVs is that
they are well defined for any collection of integers $s_1,\dots,s_l$, so they do not require regularisation as the former $q$-MZVs
and the MZVs themselves.

In the other recent work \cite{BK13,BK14} Bachmann and K\"uhn introduced and studied
a different $q$-analogue of the MZVs, namely,
\begin{align}
&
[s_1,\dots,s_l]
:=\frac1{(s_1-1)!\dotsb(s_l-1)!}\sum_{\substack{n_1>\dots>n_l>0\\d_1,\dots,d_l>0}}
d_1^{s_1-1}\dotsb d_l^{s_l-1}q^{n_1d_1+\dots+n_ld_l}
\nonumber\\ &\quad
=\frac1{(s_1-1)!\dotsb(s_l-1)!}\sum_{\substack{m_1,\dots,m_l>0\\d_1,\dots,d_l>0}}
d_1^{s_1-1}\dotsb d_l^{s_l-1}q^{(m_1+\dots+m_l)d_1+(m_2+\dots+m_l)d_2+\dots+m_ld_l}.
\label{MB}
\end{align}
The series are generating functions of multiple divisor sums, called (\emph{mono}-)\emph{brackets}, with the $\mathbb Q$-algebra spanned by them denoted by $\MD$.
Note that the $q$-series \eqref{MB} can be alternatively written
$$
[s_1,\dots,s_l]
=\frac1{(s_1-1)!\dotsb(s_l-1)!}\sum_{n_1>\dots>n_l>0}\frac{P_{s_1-1}(q^{n_1})\dotsb P_{s_l-1}(q^{n_l})}{(1-q^{n_1})^{s_1}\dotsb(1-q^{n_l})^{s_l}},
$$
where $P_{s-1}(q)$ are the (slightly modified) Eulerian polynomials:
$$
\frac{P_{s-1}(q)}{(1-q)^s}
=\biggl(q\,\frac{\d}{\d q}\biggr)^{s-1}\frac q{1-q}
=\sum_{d=1}^\infty d^{s-1}q^d.
$$
Since $P_{s-1}(1)=(s-1)!$ it is not hard to verify that
\begin{equation}
\lim_{q\to1^-}(1-q)^{s_1+\dots+s_l}[s_1,\dots,s_l]
=\zeta(s_1,\dots,s_l):=\sum_{n_1>\dots>n_l>0}\frac1{n_1^{s_1}\dotsb n_l^{s_l}}.
\label{lim1}
\end{equation}

More recently \cite{Ba14} Bachmann introduced a more general model of the brackets
\begin{align}
\biggl[\begin{matrix} s_1,\dots,s_l \\ r_1,\dots,r_l \end{matrix}\biggr]
&:=\frac1{r_1!\,(s_1-1)!\dotsb r_l!\,(s_l-1)!}\sum_{\substack{n_1>\dots>n_l>0\\d_1,\dots,d_l>0}}
n_1^{r_1}d_1^{s_1-1}\dotsb n_l^{r_l}d_l^{s_l-1}q^{n_1d_1+\dots+n_ld_l}
\nonumber\\
&\phantom:=\frac1{r_1!\,(s_1-1)!\dotsb r_l!\,(s_l-1)!}\sum_{n_1>\dots>n_l>0}
\frac{n_1^{r_1}P_{s_1-1}(q^{n_1})\dotsb n_l^{r_l}P_{s_l-1}(q^{n_l})}{(1-q^{n_1})^{s_1}\dotsb(1-q^{n_l})^{s_l}},
\label{BB}
\end{align}
which he called \emph{bi-brackets},
in order to describe, in a natural way, the double shuffle relations
of these $q$-analogues of MZVs. Note that the \emph{stuffle} (also known as \emph{harmonic} or \emph{quasi-shuffle}) product
for the both models \eqref{MB} and \eqref{BB} in Bachmann's work comes
from the standard rearrangement of the multiple sums obtained from the term-by-term multiplication of two series.
The other \emph{shuffle} product is then interpreted for the model \eqref{BB} only, as a dual product to the stuffle one via
the \emph{partition duality}. Bachmann further conjectures \cite{Ba14} that the $\mathbb Q$-algebra $\BD$ spanned by the bi-brackets \eqref{BB}
coincides with the $\mathbb Q$-algebra $\MD$.

The goal of this note is to make an algebraic setup for Bachmann's double stuffle relations as well as to demonstrate
that those relations indeed reduce to the corresponding stuffle and shuffle relations in the limit as $q\to1^-$.
We also address the reduction of the bi-brackets to the mono-brackets.

\section{Asymptotics}
\label{sect:A}

The following result allows one to control the asymptotic behaviour of the bi-brackets not only as $q\to1^-$ but also
as $q$ approaches radially a root of unity. This produces an explicit version of the asymptotics used in~\cite{Pup05}
for proving some linear and algebraic results in the case $l=1$.

\begin{lemma}
\label{lem1}
As $q=1-\eps\to1^-$,
$$
\frac1{(s-1)!}\,\frac{P_{s-1}(q^n)}{(1-q^n)^s}
=\frac1{n^s\eps^s}\bigl((1-\eps)F_{s-1}(\eps)+\hat\lambda_s\cdot\eps^s\bigr)-\hat\lambda_s+O(\eps)
$$
where the polynomials $F_k(\eps)\in\mathbb Q[\eps]$ of degree $\max\{0,k-1\}$ are generated by
\begin{align*}
\sum_{k=0}^\infty F_k(\eps)x^k
&=\frac1{1-(1-e^{-\eps x})/\eps}
\\
&= 1 + x + \biggl(-\frac12\eps+1\biggr)x^2 + \biggl(\frac16\eps^2-\eps+1\biggr)x^3
\\ &\qquad
+ \biggl(-\frac1{24}\eps^3+\frac7{12}\eps^2-\frac32\eps+1\biggr)x^4
\\ &\qquad
+ \biggl(\frac1{120}\eps^4-\frac14\eps^3+\frac54\eps^2-2\eps+1\biggr)x^5 + \dotsb
\end{align*}
and
$$
\sum_{s=0}^\infty\hat\lambda_sx^s=-\frac{xe^x}{1-e^x}=1+\frac12x+\sum_{k=1}^\infty\frac{B_{2k}}{(2k)!}x^{2k}
$$
is the generating function of Bernoulli numbers.
\end{lemma}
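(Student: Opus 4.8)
The plan is to pass to the multiplicative variable $t=-\log q=-\log(1-\eps)=\eps+O(\eps^2)$ and to observe that both sides of the asserted identity are governed by the single auxiliary function
$$
G_s(a):=\frac1{(s-1)!}\sum_{d=1}^\infty d^{s-1}e^{-ad}=\frac1{(s-1)!}\Bigl(-\frac{\d}{\d a}\Bigr)^{s-1}\frac1{e^a-1},\qquad a>0.
$$
Indeed, writing $q^{n}=e^{-nt}$ gives at once
$$
\frac1{(s-1)!}\,\frac{P_{s-1}(q^{n})}{(1-q^{n})^{s}}=\frac1{(s-1)!}\sum_{d=1}^\infty d^{s-1}q^{nd}=G_s(nt),
$$
so the lemma is exactly the statement $G_s(nt)=\dfrac{(1-\eps)F_{s-1}(\eps)+\hat\lambda_s\eps^{s}}{n^{s}\eps^{s}}-\hat\lambda_s+O(\eps)$ as $\eps\to0^+$ (with $n$ fixed).

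First I would record two facts that involve no asymptotics. (i) Comparing the generating function $\sum_{k\ge0}F_k(\eps)x^k=\eps/(\eps-1+e^{-\eps x})$ from the statement with the elementary identity $\sum_{k\ge0}\frac{x^{k}}{k!}\sum_{d\ge1}d^{k}(1-\eps)^{d}=\dfrac{(1-\eps)e^{\eps x}}{1-(1-\eps)e^{\eps x}}$ shows that $(s-1)!\,(1-\eps)F_{s-1}(\eps)=\eps^{s}\sum_{d\ge1}d^{s-1}(1-\eps)^{d}$, i.e.
$$
G_s(t)=\frac{(1-\eps)F_{s-1}(\eps)}{\eps^{s}}
$$
is an exact identity; it also exhibits $F_{s-1}$ as a polynomial in $\eps$ of degree $\max\{0,s-2\}$, since $P_{s-1}(q)/q$ has that degree. (ii) The Laurent expansion $\frac1{e^a-1}=\sum_{m\ge0}\frac{B_m}{m!}a^{m-1}$ converges for $0<|a|<2\pi$, so termwise differentiation is legitimate and yields, as $a\to0^+$,
$$
G_s(a)=\frac1{a^{s}}-\hat\lambda_s+O(a);
$$
here the constant term is $\frac1{(s-1)!}(-1)^{s-1}(s-1)!\,\frac{B_s}{s!}=(-1)^{s-1}\frac{B_s}{s}$ for $s\ge2$ and $B_1/1!=-\tfrac12$ for $s=1$, which in every case equals $-\hat\lambda_s$ by the generating function $-xe^x/(1-e^x)$ of the $\hat\lambda_s$.

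The lemma then follows by combining these. Specialising (ii) to $a=nt$ (with $n$ fixed, so $O(nt)=O(\eps)$) gives $G_s(nt)=\frac1{n^{s}t^{s}}-\hat\lambda_s+O(\eps)$; specialising (ii) to $a=t$ and inserting the exact identity (i) gives $\frac1{t^{s}}=G_s(t)+\hat\lambda_s+O(\eps)=\frac{(1-\eps)F_{s-1}(\eps)}{\eps^{s}}+\hat\lambda_s+O(\eps)$. Substituting the latter into the former yields
$$
G_s(nt)=\frac1{n^{s}}\Bigl(\frac{(1-\eps)F_{s-1}(\eps)}{\eps^{s}}+\hat\lambda_s\Bigr)-\hat\lambda_s+O(\eps)=\frac{(1-\eps)F_{s-1}(\eps)+\hat\lambda_s\eps^{s}}{n^{s}\eps^{s}}-\hat\lambda_s+O(\eps),
$$
which is the claim. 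The one genuinely delicate point is step (ii): one must justify the termwise differentiation of the Laurent series and keep the $O(a)$ honest (its implied constant depends on $s$, and after $a=nt$ also on $n$), and — a small curiosity — check that the parity‑dependent constant term collapses to the single expression $-\hat\lambda_s$, the case $s=1$ working only because $\hat\lambda_1=\tfrac12=-B_1$ compensates the sign $(-1)^{s-1}$. By contrast, expanding $(1-q^{n})^{s}=(1-(1-\eps)^n)^{s}$ directly in powers of $\eps$ would force one to track $s+1$ coefficients and is best avoided.
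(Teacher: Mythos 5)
The paper offers no proof of Lemma~\ref{lem1} beyond the remark that it is ``technical but straightforward'', so there is no argument of the author's to compare yours against; what you have written is a correct and essentially complete proof that supplies the missing details. The skeleton is right: the generating-function computation gives the \emph{exact} identity $(s-1)!\,(1-\eps)F_{s-1}(\eps)=\eps^{s}\sum_{d\ge1}d^{s-1}(1-\eps)^{d}$, i.e.\ $G_s(t)=(1-\eps)F_{s-1}(\eps)/\eps^{s}$ for $t=-\log(1-\eps)$; the Laurent expansion of $1/(e^{a}-1)$ gives $G_s(a)=a^{-s}-\hat\lambda_s+O(a)$; and eliminating $t^{-s}$ between the evaluations at $a=t$ and $a=nt$ yields exactly the stated asymptotics (it reproduces the paper's displayed cases $s=1,2$). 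Two slips should be repaired, neither of which affects the conclusions you actually use. First, the displayed ``elementary identity'' is false as written: one has $\sum_{k\ge0}\frac{x^{k}}{k!}\sum_{d\ge1}d^{k}(1-\eps)^{d}=\frac{(1-\eps)e^{x}}{1-(1-\eps)e^{x}}$ with no $\eps$ in the exponent; to compare with $\sum_kF_k(\eps)x^k=\eps/(\eps-1+e^{-\eps x})$ you must replace $x$ by $\eps x$, so the left-hand side should carry $(\eps x)^{k}$. The consequence you extract, $(s-1)!\,(1-\eps)F_{s-1}(\eps)=\eps^{s}\sum_{d}d^{s-1}(1-\eps)^{d}$, is the correct one. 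Second, the constant term of $G_s$ simplifies to $(-1)^{s-1}B_s/s!$, not $(-1)^{s-1}B_s/s$; only with the factorial does it equal $-\hat\lambda_s$ in every case (even $s$; odd $s\ge3$, where $B_s=0$; and $s=1$). Your closing caveat is also apt: the implied constant in $O(\eps)$ depends on $n$ and $s$, which is harmless for the lemma as stated but would need attention in any application that sums over $n$, such as Proposition~\ref{prop2}.
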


\begin{proof}
The proof is technical but straightforward.
\end{proof}

By moving the constant term $\hat\lambda_s$ to the right-hand side, we get
\begin{align*}
\frac12+\frac{P_0(q^n)}{1-q^n}
&=\frac1n\cdot\biggl(\eps^{-1}-\frac12\biggr)+O(\eps),
\\
\frac1{12}+\frac{P_1(q^n)}{(1-q^n)^2}
&=\frac1{n^2}\cdot\biggl(\eps^{-2}-\eps^{-1}+\frac1{12}\biggr)+O(\eps),
\\
\frac{P_2(q^n)}{(1-q^n)^3}
&=\frac1{n^3}\cdot\biggl(\eps^{-3}-\frac32\eps^{-2}+\frac12\eps^{-1}\biggr)+O(\eps),
\\
-\frac1{720}+\frac{P_3(q^n)}{(1-q^n)^4}
&=\frac1{n^4}\cdot\biggl(\eps^{-4}-2\eps^{-3}+\frac76\eps^{-2}-\frac16\eps^{-1}-\frac1{720}\biggr)+O(\eps),
\end{align*}
and so on.

\begin{proposition}
\label{prop2}
Assume that $s_1>r_1+1$ and $s_j\ge r_j+1$ for $j=2,\dots,l$.
Then
$$
\biggl[\begin{matrix} s_1,\dots,s_l \\ r_1,\dots,r_l \end{matrix}\biggr]
\sim\frac{\zeta(s_1-r_1,s_2-r_2,\dots,s_l-r_l)}{r_1!\,r_2!\dotsb r_l!}\,\frac1{(1-q)^{s_1+s_2+\dots+s_l}}
\qquad\text{as}\quad q\to1^-,
$$
where $\zeta(s_1,\dots,s_l)$ denotes the standard MZV.
\end{proposition}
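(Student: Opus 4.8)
The strategy is to feed the asymptotic expansion of Lemma~\ref{lem1} into the bracket sum \eqref{BB} and keep only the leading term in $\eps=1-q$. Write $A_s(q^n):=\frac1{(s-1)!}\,\frac{P_{s-1}(q^n)}{(1-q^n)^s}$; then
$$
\biggl[\begin{matrix} s_1,\dots,s_l \\ r_1,\dots,r_l \end{matrix}\biggr]
=\frac1{r_1!\dotsb r_l!}\sum_{n_1>\dots>n_l>0}\frac{n_1^{r_1}\dotsb n_l^{r_l}}{?}\,A_{s_1}(q^{n_1})\dotsb A_{s_l}(q^{n_l}),
$$
and by Lemma~\ref{lem1} each factor satisfies $A_{s_j}(q^{n_j})=\dfrac1{n_j^{s_j}\eps^{s_j}}\bigl(1+O(\eps)\bigr)-\hat\lambda_{s_j}+O(\eps)$, the dominant piece being $n_j^{-s_j}\eps^{-s_j}$. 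Thus, term by term, the summand behaves like $n_1^{r_1-s_1}\dotsb n_l^{r_l-s_l}\cdot\eps^{-(s_1+\dots+s_l)}$ divided by $r_1!\dotsb r_l!$, and summing over $n_1>\dots>n_l>0$ produces $\zeta(s_1-r_1,\dots,s_l-r_l)$ provided that MZV converges. The hypotheses $s_1>r_1+1$ and $s_j\ge r_j+1$ are exactly the convergence condition for $\zeta(s_1-r_1,s_2-r_2,\dots,s_l-r_l)$ (first argument $\ge2$, the rest $\ge1$). Dividing by $(1-q)^{s_1+\dots+s_l}=\eps^{s_1+\dots+s_l}$ yields the claimed equivalent.

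To make this rigorous I would proceed in three steps. First, split each factor $A_{s_j}(q^{n_j})$ as $n_j^{-s_j}\eps^{-s_j}+R_j$, where $R_j=R_j(n_j,\eps)$ collects the lower-order contributions; expanding the product of $l$ such factors gives the main term plus $2^l-1$ error terms, each missing at least one factor of $\eps^{-s_j}$. Second, bound the main term: $\eps^{-(s_1+\dots+s_l)}\sum_{n_1>\dots>n_l>0}n_1^{r_1-s_1}\dotsb n_l^{r_l-s_l}$ is exactly $\eps^{-(s_1+\dots+s_l)}\zeta(s_1-r_1,\dots,s_l-r_l)$, a convergent constant times $\eps^{-(s_1+\dots+s_l)}$. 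Third, show every error term is $o\bigl(\eps^{-(s_1+\dots+s_l)}\bigr)$: a straightforward way is to prove a uniform bound $|A_{s}(q^n)|\le C_s\,n^{-s}\eps^{-s}$ and $|R_j|\le C_j\,n_j^{-s_j}\eps^{-s_j+1}$ valid for all $n\ge1$ and $0<\eps<1/2$ — this follows by an Abel-summation or integral-comparison argument from $\sum_{d\ge1}d^{s-1}q^{nd}$, together with Lemma~\ref{lem1} to control the shape of the expansion — and then each error term, having at least one $R_j$ in place of the leading factor, is $O\bigl(\eps^{-(s_1+\dots+s_l)+1}\bigr)$ after summation (the sums over $n_j$ still converge because $r_j-s_j\le-1$, with strict inequality for $j=1$).

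The main obstacle is the interchange of the limit $\eps\to0^+$ with the infinite summation over $n_1>\dots>n_l>0$: the individual asymptotics of Lemma~\ref{lem1} are pointwise in $n$, so I need \emph{uniformity in $n$} to conclude. Concretely, the delicate point is the behaviour of $A_s(q^n)$ when $n$ is large compared with $1/\eps$, i.e.\ when $q^n$ is small; there the naive expansion $n^{-s}\eps^{-s}$ is no longer a good approximation, but in that regime $A_s(q^n)=O(q^n)$ is \emph{small}, so the tail $\sum_{n>1/\eps}$ contributes negligibly. Splitting the $n_j$-ranges at $n_j\sim 1/\eps$ and estimating the two parts separately — dominated convergence on the main range, crude geometric bounds on the tail — is the technical heart of the argument. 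Once the uniform bounds are in place, collecting the main term and discarding the $O(\eps)$-smaller pieces gives the asserted equivalence.
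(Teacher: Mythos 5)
The paper offers no explicit proof of Proposition~\ref{prop2}: it is stated as a direct consequence of Lemma~\ref{lem1}, which is exactly the route you take --- substitute the expansion of Lemma~\ref{lem1} into \eqref{BB}, keep the leading term $n_j^{-s_j}\eps^{-s_j}$ of each factor so that the sum produces $\zeta(s_1-r_1,\dots,s_l-r_l)$ (convergent precisely under the stated hypotheses), and control the remaining terms uniformly in the $n_j$. Your outline also correctly isolates the one genuine technical point, namely that the pointwise expansion of Lemma~\ref{lem1} is not uniform for $n\gtrsim 1/\eps$ and must be supplemented by tail estimates there, so it matches the intended argument.
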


Another way to tackle the asymptotic behaviour of the (bi-)brackets is based on the Mellin transform
$$
\varphi(t)\mapsto\wt\varphi(s)=\int_0^\infty\varphi(t)t^{s-1}\d t
$$
which maps
$$
q^{n_1d_1+\dots+n_ld_l}\big|_{q=e^{-t}}\mapsto\frac{\Gamma(s)}{(n_1d_1+\dots+n_ld_l)^s};
$$
see \cite{FS09,Zag06}. Note that the bijective correspondence between the bi-brackets and the zeta functions
$$
\frac{\Gamma(s)}{r_1!\,(s_1-1)!\dotsb r_l!\,(s_l-1)!}\sum_{\substack{n_1>\dots>n_l>0\\d_1,\dots,d_l>0}}
\frac{n_1^{r_1}d_1^{s_1-1}\dotsb n_l^{r_l}d_l^{s_l-1}}{(n_1d_1+\dots+n_ld_l)^s}
$$
can be potentially used for determining the linear relations of the former. A simple illustration is the
linear independence of the depth~1 bi-brackets.

\begin{theorem}
\label{th1}
The bi-brackets $\bigl[\begin{smallmatrix}s_1\\r_1\end{smallmatrix}\bigr]$,
where $0\le r_1<s_1\le n$, $s_1+r_1\le n$, are linearly independent over $\mathbb Q$.
Therefore, the dimension $d_n^{\BD}$ of the $\mathbb Q$-space spanned by all bi-brackets of weight at most~$n$ is
bounded from below by $\lfloor(n+1)^2/4\rfloor\ge n(n+2)/4$.
\end{theorem}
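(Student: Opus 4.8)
The plan is to transfer the question, via the Mellin transform, into a linear independence statement for products of shifted Riemann zeta functions. Suppose we are given a $\mathbb Q$-linear relation $\sum c_{s_1,r_1}\bigl[\begin{smallmatrix}s_1\\r_1\end{smallmatrix}\bigr]=0$, the sum running over the finite set $0\le r_1<s_1\le n$, $s_1+r_1\le n$; since each of these finitely many bi-brackets is holomorphic in $|q|<1$, this is an identity of functions. Put $q=e^{-t}$, multiply by $t^{s-1}$ and integrate over $t\in(0,\infty)$; for $\Re s>n$ every term converges absolutely and the sum and integral may be interchanged, so by $q^{nd}|_{q=e^{-t}}\mapsto\Gamma(s)/(nd)^s$ we obtain
$$\Gamma(s)\sum_{s_1,r_1}\frac{c_{s_1,r_1}}{r_1!\,(s_1-1)!}\,\zeta(s-r_1)\,\zeta(s-s_1+1)=0.$$
Dividing by $\Gamma(s)$, passing to the analytic continuation, and writing $a=r_1$, $b=s_1-1$ (so $0\le a\le b$ and $a+b\le n-1$), the relation becomes $\sum_{a\le b}\gamma_{a,b}\,\zeta(s-a)\,\zeta(s-b)=0$ with $\gamma_{a,b}=c_{b+1,a}/(a!\,b!)$. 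It therefore suffices to show that the functions $\zeta(s-a)\zeta(s-b)$ with $0\le a\le b$ are linearly independent over $\mathbb C$.

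For that I would use Dirichlet series: $\zeta(s-a)\zeta(s-b)=\sum_{N\ge1}\sigma_{a,b}(N)\,N^{-s}$, where $\sigma_{a,b}(N)=\sum_{d\mid N}d^{a}(N/d)^{b}$ is a multiplicative function of $N$. By uniqueness of Dirichlet expansions, $\sum_{a,b}\gamma_{a,b}\zeta(s-a)\zeta(s-b)=0$ is equivalent to $\sum_{a,b}\gamma_{a,b}\sigma_{a,b}(N)=0$ for all $N\ge1$. Fix two distinct primes $p_1,p_2$ and restrict $N$ to the products $p_1^{j}p_2^{k}$; by multiplicativity and the elementary identity $\sum_{k\ge0}\sigma_{a,b}(p^{k})t^{k}=\bigl((1-p^{a}t)(1-p^{b}t)\bigr)^{-1}$, these relations amount to the single identity of rational functions
$$\sum_{0\le a\le b}\gamma_{a,b}\,\frac{1}{(1-p_1^{a}t_1)(1-p_1^{b}t_1)(1-p_2^{a}t_2)(1-p_2^{b}t_2)}=0.$$

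Now I would argue by downward induction on $M=\max\{b:\gamma_{a,b}\ne0\ \text{for some}\ a\le b\}$. Viewed as a function of $t_1$ for generic $t_2$, the left-hand side has a double pole at $t_1=p_1^{-M}$ contributed only by the term $(a,b)=(M,M)$, so the coefficient of $(t_1-p_1^{-M})^{-2}$ is $\gamma_{M,M}\,p_1^{-2M}(1-p_2^{M}t_2)^{-2}$, forcing $\gamma_{M,M}=0$. With that term removed, the residue at $t_1=p_1^{-M}$ equals $\sum_{a<M}\gamma_{a,M}\,c_a\,(1-p_2^{a}t_2)^{-1}(1-p_2^{M}t_2)^{-1}$ with every $c_a=-p_1^{-M}/(1-p_1^{a-M})\ne0$; clearing the factor $(1-p_2^{M}t_2)$ and using that the functions $(1-p_2^{a}t_2)^{-1}$ have pairwise distinct poles and are hence linearly independent, we get $\gamma_{a,M}=0$ for all $a<M$. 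Thus the maximal index drops and the induction terminates, which gives the required independence and hence $c_{s_1,r_1}=0$ for all indices. For the dimensional assertion, the admissible pairs $(a,b)$ with $0\le a\le b$ and $a+b\le n-1$ number $\sum_{w=1}^{n}\lceil w/2\rceil=\lfloor(n+1)^2/4\rfloor$, which yields $d_n^{\BD}\ge\lfloor(n+1)^2/4\rfloor\ge n(n+2)/4$.

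I expect the linear independence of the products $\zeta(s-a)\zeta(s-b)$ to be the main obstacle. The reason a second prime is genuinely needed is that for a single prime the rational functions $\bigl((1-p^{a}t)(1-p^{b}t)\bigr)^{-1}$ already satisfy relations: the three ``mixed'' ones with $a<b\le2$ are combinations of $(1-t)^{-1},(1-pt)^{-1},(1-p^{2}t)^{-1}$, and the resulting $3\times3$ coefficient matrix turns out to be singular. Introducing $p_2$ is exactly what lets the residue bookkeeping collapse the system.
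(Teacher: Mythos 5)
Your reduction is the same as the paper's: both proofs send $\bigl[\begin{smallmatrix}s_1\\r_1\end{smallmatrix}\bigr]$ through the Mellin transform to $\Gamma(s)\,\zeta(s-s_1+1)\zeta(s-r_1)/\bigl((s_1-1)!\,r_1!\bigr)$ and reduce everything to the linear independence of the products $\zeta(s-a)\zeta(s-b)$ with $0\le a\le b$, $a+b\le n-1$, followed by the count $\lfloor(n+1)^2/4\rfloor$ of admissible pairs. Where you genuinely diverge is in how that independence is established. The paper settles it in one line by observing that the pole locations $s=s_1$ and $s=r_1+1$ form pairwise distinct two-element multisets; you instead pass to the Dirichlet coefficients $\sigma_{a,b}(N)$, use multiplicativity to restrict to $N=p_1^jp_2^k$, and run a downward induction on $M$ via residues of the two-variable generating function. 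Your route is longer but buys real robustness: the pole argument in the $s$-variable must cope with the fact that distinct products share individual poles (so the residue conditions form a linear system rather than decoupling term by term), and with the fact that the residue of $\zeta(s-s_1+1)\zeta(s-r_1)$ at $s=r_1+1$ equals $\zeta(r_1-s_1+2)$, which vanishes whenever $s_1-r_1$ is even and at least $4$, so the actual pole set can be smaller than $\{s_1,r_1+1\}$. Your two-prime argument sidesteps both issues, and your observation that a single prime does not suffice (the $3\times3$ local matrix is indeed singular) is correct and worth recording; the residue bookkeeping at $t_1=p_1^{-M}$, the nonvanishing of the constants $c_a$, the justification of the sum--integral interchange for $\Re s>n$, and the final count all check out.
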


\begin{proof}
Indeed, the functions
\begin{gather*}
\frac{\Gamma(s)}{r_1!\,(s_1-1)!}\sum_{n_1,d_1>0}\frac{n_1^{r_1}d_1^{s_1-1}}{(n_1d_1)^s}
=\Gamma(s)\frac{\zeta(s-s_1+1)\zeta(s-r_1)}{(s_1-1)!\,r_1!},
\\ 
\text{where}\quad 0\le r_1<s_1\le n, \quad s_1+r_1\le n,
\end{gather*}
are linearly independent over~$\mathbb Q$ (because of their disjoint sets of poles at $s=s_1$ and $s=r_1+1$, respectively); thus the corresponding
bi-brackets $\bigl[\begin{smallmatrix}s_1\\r_1\end{smallmatrix}\bigr]$ are $\mathbb Q$-linearly independent as well.
\end{proof}

A similar (though more involved) analysis can be applied
to describe the Mellin transform of the depth~2 bi-brackets; note that it is more easily done for another $q$-model
we introduce further in Section~\ref{sect:D}.

\section{The stuffle product}
\label{sect:T}

Consider the alphabet $Z=\{z_{s,r}:s,r=1,2,\dots\}$ on the double-indexed letters $z_{s,r}$ of the pre-defined weight $s+r-1$.
On $\mathbb QZ$ define the (commutative) product
\begin{align}
z_{s_1,r_1}\diamond z_{s_2,r_2}
&:=\binom{r_1+r_2-2}{r_1-1}\biggl(z_{s_1+s_2,r_1+r_2-1}
\nonumber\\ &\qquad
+\sum_{j=1}^{s_1}(-1)^{s_2-1}\binom{s_1+s_2-j-1}{s_1-j}\lambda_{s_1+s_2-j}z_{j,r_1+r_2-1}
\nonumber\\ &\qquad
+\sum_{j=1}^{s_2}(-1)^{s_1-1}\binom{s_1+s_2-j-1}{s_2-j}\lambda_{s_1+s_2-j}z_{j,r_1+r_2-1}\biggr),
\label{diam}
\end{align}
where
$$
\sum_{s=0}^\infty\lambda_sx^s=-\frac{x}{1-e^x}=1+\sum_{s=1}^\infty\frac{B_s}{s!}x^s
$$
is the generating function of Bernoulli numbers.
Note that $\hat\lambda_s=\lambda_s$ for $s\ge2$, while $\hat\lambda_1=\frac12=-\lambda_1$ in the notation of Section~\ref{sect:A}.

As explained in \cite{BK13} (after the proof of Proposition~2.9),
the product $\diamond$ is also associative. With the help of \eqref{diam} define the stuffle product on the $\mathbb Q$-algebra $\mathbb Q\langle Z\rangle$
recursively by $1\tuffle w=w\tuffle 1:=w$ and
\begin{equation}
aw\tuffle bv
:=a(w\tuffle bv)+b(aw\tuffle v)+(a\diamond b)(w\tuffle v),
\label{tuff}
\end{equation}
for arbitrary $w,v\in\mathbb Q\langle Z\rangle$ and $a,b\in Z$.

\begin{proposition}
\label{prop1}
The evaluation map
\begin{equation}
[\,\cdot\,]\colon z_{s_1,r_1}\dots z_{s_l,r_l}\mapsto\biggl[\begin{matrix} s_1,\dots,s_l \\ r_1-1,\dots,r_l-1 \end{matrix}\biggr]
\label{evamap}
\end{equation}
extended to $\mathbb Q\langle Z\rangle$ by linearity satisfies
$[w\tuffle v]=[w]\cdot[v]$, so that it is a homomorphism of the $\mathbb Q$-algebra $(\mathbb Q\langle Z\rangle,\tuffle)$ onto
$(\BD,\,\cdot\,)$, the latter hence being a $\mathbb Q$-algebra as well.
\end{proposition}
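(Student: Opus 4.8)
The plan is to establish the identity $\left[z_{s_1,r_1}\tuffle z_{s_2,r_2}\right]=\left[z_{s_1,r_1}\right]\cdot\left[z_{s_2,r_2}\right]$ first for single letters, i.e.\ at depth one, and then to bootstrap to arbitrary words by induction on the total length using the recursive definition~\eqref{tuff}. For the depth-one case, I would start from the product of two one-fold brackets written in the form~\eqref{BB}:
$$
\left[\begin{matrix}s_1\\r_1-1\end{matrix}\right]\cdot\left[\begin{matrix}s_2\\r_2-1\end{matrix}\right]
=\frac1{(r_1-1)!\,(s_1-1)!\,(r_2-1)!\,(s_2-1)!}\sum_{\substack{n_1,n_2>0\\d_1,d_2>0}}n_1^{r_1-1}d_1^{s_1-1}n_2^{r_2-1}d_2^{s_2-1}q^{n_1d_1+n_2d_2}.
$$
One then splits the summation region over $(n_1,n_2)$ into $n_1>n_2$, $n_1<n_2$ and $n_1=n_2$. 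The first two regions reproduce the depth-two brackets coming from the words $z_{s_1,r_1}z_{s_2,r_2}$ and $z_{s_2,r_2}z_{s_1,r_1}$, which matches the $a(w\tuffle bv)+b(aw\tuffle v)$ part of~\eqref{tuff} (here with $w=v=1$). The diagonal $n_1=n_2=:n$ is the crux: one must evaluate $\sum_{n>0}n^{r_1+r_2-2}P_{s_1-1}(q^n)P_{s_2-1}(q^n)/((1-q^n)^{s_1+s_2})$ and show it equals $[\,a\diamond b\,]$, i.e.\ that the right-hand side of~\eqref{diam} (under~\eqref{evamap}) reproduces it. This reduces to a purely combinatorial identity for Eulerian polynomials, namely the expansion of the product $P_{s_1-1}(t)P_{s_2-1}(t)(1-t)^{-s_1-s_2}$ back into the basis $\{P_{j-1}(t)(1-t)^{-j}\}$ plus Bernoulli-type constants; equivalently, multiplying $\sum_{d_1}d_1^{s_1-1}t^{d_1}$ by $\sum_{d_2}d_2^{s_2-1}t^{d_2}$ and re-expanding $\sum_d (\text{polynomial in }d)t^d$. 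This is exactly the classical computation underlying the stuffle product for the Bachmann--K\"uhn model, and I would cite \cite{BK13} (the passage around Proposition~2.9 referenced in the excerpt) for the precise combinatorial identity, so that the binomial coefficients $\binom{r_1+r_2-2}{r_1-1}$ (coming from $\sum_{a+b=n}\binom{\cdot}{\cdot}$-type convolution of the $n^{r_i-1}$ factors, i.e.\ the Vandermonde-style identity $\sum n_1^{r_1-1}n_2^{r_2-1}=\binom{r_1+r_2-2}{r_1-1}n^{r_1+r_2-2}$ after the diagonal substitution is \emph{not} quite it --- rather it is the coefficient arising when one symmetrises) and the Bernoulli numbers $\lambda_{s_1+s_2-j}$ fall out in precisely the shape of~\eqref{diam}.

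With the depth-one case in hand, the general statement follows by a standard double induction on $\operatorname{len}(w)+\operatorname{len}(v)$. Writing $w=aw'$ and $v=bv'$ with $a=z_{s_1,r_1}$, $b=z_{s_2,r_2}$, one applies~\eqref{tuff} and the linearity of $[\,\cdot\,]$ to get
$$
[aw'\tuffle bv']=[a(w'\tuffle bv')]+[b(aw'\tuffle v')]+[(a\diamond b)(w'\tuffle v')].
$$
Now one needs a \emph{prepending lemma}: for any letter $a=z_{s,r}$ and any word $u$, the bracket $[z_{s,r}u]$ can be obtained from $[u]$ and the ``head'' data in a way compatible with the outer summation variable $n_1$. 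Concretely, $[z_{s,r}u]=\frac1{r!\,(s-1)!}\sum_{n_1>N(u)}n_1^{r-1}\frac{P_{s-1}(q^{n_1})}{(1-q^{n_1})^s}\cdot(\text{the }u\text{-part with all its }n\text{'s bounded by }n_1)$, so that multiplying $[w']=[w'$-part$]$ by the depth-one factor $[\begin{smallmatrix}s_1\\r_1-1\end{smallmatrix}]$ and re-sorting the largest index reproduces exactly the three terms above --- this is the same region-splitting ($n_1>n_2$, $n_1<n_2$, $n_1=n_2$ for the two ``top'' variables) as in the base case, now carried inside the multiple sum while the induction hypothesis handles the tails $w'\tuffle bv'$, $aw'\tuffle v'$ and $w'\tuffle v'$. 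Putting these together gives $[w\tuffle v]=[w]\cdot[v]$.

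Finally, to conclude that $[\,\cdot\,]$ is an algebra homomorphism \emph{onto} $(\BD,\cdot)$, note surjectivity is immediate: by definition $\BD$ is the $\mathbb Q$-span of all bi-brackets, and~\eqref{evamap} hits every generator $\bigl[\begin{smallmatrix}s_1,\dots,s_l\\r_1-1,\dots,r_l-1\end{smallmatrix}\bigr]$; since $[\,\cdot\,]$ is linear and multiplicative, its image is a subalgebra containing all generators, hence all of $\BD$. That $\BD$ is closed under multiplication --- a priori not obvious --- is then a \emph{consequence}: a product of two bi-brackets is $[w]\cdot[v]=[w\tuffle v]$, and $w\tuffle v\in\mathbb Q\langle Z\rangle$, so its image is a $\mathbb Q$-linear combination of bi-brackets. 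The main obstacle is unquestionably the diagonal term in the base case: verifying that the explicit combinatorial expression~\eqref{diam}, with its two Bernoulli-weighted sums and the binomial prefactor, is exactly the re-expansion of the product of two (modified) Eulerian-polynomial fractions. Everything else is bookkeeping with the recursion~\eqref{tuff} and careful tracking of the ordering constraints on the summation indices. I would lean on \cite{BK13,Ba14} for the statement of that Eulerian identity rather than re-deriving it from scratch.
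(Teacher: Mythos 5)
Your proposal is correct and follows essentially the same route as the paper: the paper's proof likewise reduces everything to the standard quasi-shuffle region-splitting argument of \cite[Proposition~2.10]{BK13}, with the only new ingredient being the diagonal identity expressing $\frac{n^{r_1-1}P_{s_1-1}(q^n)}{(s_1-1)!\,(r_1-1)!\,(1-q^n)^{s_1}}\cdot\frac{n^{r_2-1}P_{s_2-1}(q^n)}{(s_2-1)!\,(r_2-1)!\,(1-q^n)^{s_2}}$ in terms of the right-hand side of \eqref{diam} under the evaluation map\,---\,exactly the Eulerian-polynomial re-expansion you identify as the crux (and the binomial prefactor is simply $\frac{(r_1+r_2-2)!}{(r_1-1)!\,(r_2-1)!}$ from the factorial normalisation on the diagonal $n_1=n_2$, not a Vandermonde convolution).
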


\begin{proof}
The proof follows the lines of the proof of \cite[Proposition 2.10]{BK13} based on the identity
\begin{align*}
&
\frac{n^{r_1-1}P_{s_1-1}(q^n)}{(s_1-1)!\,(r_1-1)!\,(1-q^n)^{s_1}}
\cdot\frac{n^{r_2-1}P_{s_2-1}(q^n)}{(s_2-1)!\,(r_2-1)!\,(1-q^n)^{s_2}}
\\ &\quad
=\binom{r_1+r_2-2}{r_1-1}\frac{n^{r_1+r_2-2}}{(r_1+r_2-2)!}
\biggl(\frac{P_{s_1+s_2-1}(q^n)}{(s_1+s_2-1)!\,(1-q^n)^{s_1+s_2}}
\\ &\quad\qquad
+\sum_{j=1}^{s_1}(-1)^{s_2-1}\binom{s_1+s_2-j-1}{s_1-j}\lambda_{s_1+s_2-j}\frac{P_{j-1}(q^n)}{(j-1)!\,(1-q^n)^j}
\\ &\quad\qquad
+\sum_{j=1}^{s_2}(-1)^{s_1-1}\binom{s_1+s_2-j-1}{s_2-j}\lambda_{s_1+s_2-j}\frac{P_{j-1}(q^n)}{(j-1)!\,(1-q^n)^j}\biggr).
\qedhere
\end{align*}
\end{proof}

Modulo the highest weight, the commutative product \eqref{diam} on $Z$ assumes the form
\begin{equation*}
z_{s_1,r_1}\diamond z_{s_2,r_2}
\equiv\binom{r_1+r_2-2}{r_1-1}z_{s_1+s_2,r_1+r_2-1},
\end{equation*}
so that the stuffle product~\eqref{tuff} reads
\begin{align}
z_{s_1,r_1}w\tuffle z_{s_2,r_2}v
&\equiv z_{s_1,r_1}(w\tuffle z_{s_2,r_2}v)+z_{s_2,r_2}(z_{s_1,r_1}w\tuffle v)
\nonumber\\ &\qquad
+\binom{r_1+r_2-2}{r_1-1}z_{s_1+s_2,r_1+r_2-1}(w\tuffle v)
\label{tuff1}
\end{align}
for arbitrary $w,v\in\mathbb Q\langle Z\rangle$ and $z_{s_1,r_1},z_{s_2,r_2}\in Z$.
If we set $z_s:=z_{s,1}$ and further restrict the product to the subalgebra $\mathbb Q\langle Z'\rangle$, where
$Z'=\{z_s:s=1,2,\dots\}$, then Proposition~\ref{prop2} results in the following statement.

\begin{theorem}[\cite{BK13}]
\label{th:T}
For admissible words $w=z_{s_1}\dots z_{s_l}$ and $v=z_{s_1'}\dotsb z_{s_m'}$ of weight $|w|=s_1+\dots+s_l$ and $|v|=s_1'+\dots+s_m'$, respectively,
$$
[w\tuffle v]\sim(1-q)^{-|w|-|v|}\zeta(w*v)
\qquad\text{as}\quad q\to1^-,
$$
where $*$ denotes the standard stuffle \textup(harmonic\textup) product of MZVs on $\mathbb Q\langle Z'\rangle$.
\end{theorem}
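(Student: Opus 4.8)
The plan is to combine the algebra-homomorphism property of Proposition~\ref{prop1} with the $q\to1^-$ asymptotics supplied by Proposition~\ref{prop2} (equivalently, by the limit formula~\eqref{lim1}), and to close the argument with the classical harmonic-product relation for the MZVs, exactly in the spirit of~\cite{BK13}.

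First I would record that the stuffle product $\tuffle$ restricts to the subalgebra $\mathbb Q\langle Z'\rangle$. Indeed, putting $r_1=r_2=1$ in~\eqref{diam}, every letter occurring in $z_{s_1,1}\diamond z_{s_2,1}$ has second index $r_1+r_2-1=1$, so $z_{s_1,1}\diamond z_{s_2,1}\in\mathbb QZ'$; an easy induction on the total length via~\eqref{tuff} then shows that $\tuffle$ maps $\mathbb Q\langle Z'\rangle\otimes\mathbb Q\langle Z'\rangle$ into $\mathbb Q\langle Z'\rangle$. Consequently Proposition~\ref{prop1}, restricted to $\mathbb Q\langle Z'\rangle$, gives the identity of $q$-series $[w\tuffle v]=[w]\cdot[v]$; note that under the evaluation map~\eqref{evamap} an admissible word $w=z_{s_1}\dots z_{s_l}$ is sent to the mono-bracket $[s_1,\dots,s_l]$ (all lower indices $0$).

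Next, admissibility of $w$ means $s_1\ge2$, and of course $s_j\ge1$ for every $j$, so Proposition~\ref{prop2} applies to $[w]$ and to $[v]$ with all lower indices taken to be $0$; it yields
$$
[w]\sim\zeta(s_1,\dots,s_l)\,(1-q)^{-|w|}
\quad\text{and}\quad
[v]\sim\zeta(s_1',\dots,s_m')\,(1-q)^{-|v|}
\qquad(q\to1^-).
$$
Since $\zeta(w):=\zeta(s_1,\dots,s_l)$ and $\zeta(v)$ are strictly positive real numbers, these are genuine asymptotic equivalences rather than mere $O$-estimates, and multiplying them (using the identity of the previous step) gives $[w\tuffle v]\sim(1-q)^{-|w|-|v|}\zeta(w)\zeta(v)$. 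It then remains to invoke the classical stuffle relation $\zeta(w)\zeta(v)=\zeta(w*v)$, where $*$ is the harmonic product on $\mathbb Q\langle Z'\rangle$ and $\zeta(\cdot)$ is extended $\mathbb Q$-linearly over the admissible words of weight $|w|+|v|$ that make up $w*v$; this rewrites the equivalence in the desired form.

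I do not expect a genuine obstacle: the only steps needing a word of justification are the stability of $\mathbb Q\langle Z'\rangle$ under $\diamond$ (immediate from~\eqref{diam}) and the classical MZV stuffle identity (standard). A tempting alternative is to work ``modulo lower weight'': reducing~\eqref{tuff1} one sees that $z_{s_1}w\tuffle z_{s_2}v\equiv z_{s_1}(w\tuffle z_{s_2}v)+z_{s_2}(z_{s_1}w\tuffle v)+z_{s_1+s_2}(w\tuffle v)$ up to words of weight $<|w|+|v|$, which is precisely the defining recursion of $*$, so $w\tuffle v\equiv w*v$ modulo lower weight, after which Proposition~\ref{prop2} should dispose of the correction terms. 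The snag with this route is that the lower-weight remainder can involve \emph{non-admissible} mono-brackets such as $[1,t_2,\dots,t_k]$, to which Proposition~\ref{prop2} does not directly apply and whose growth as $q\to1^-$ would have to be estimated separately; the homomorphism argument above sidesteps this entirely, which is why I would prefer it.
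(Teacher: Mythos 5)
Your argument is correct and is essentially the paper's own: the theorem is stated there without a formal proof (it is attributed to \cite{BK13}), but the remark immediately following it---combining $[w\tuffle v]=[w]\cdot[v]$ from Proposition~\ref{prop1} with the asymptotics $[w]\sim(1-q)^{-|w|}\zeta(w)$, $[v]\sim(1-q)^{-|v|}\zeta(v)$ from Proposition~\ref{prop2} and the classical stuffle identity $\zeta(w)\zeta(v)=\zeta(w*v)$---is exactly your chain of reasoning. Your extra checks (closure of $\mathbb Q\langle Z'\rangle$ under $\diamond$ and hence under $\tuffle$, positivity of the MZVs so that the product of asymptotic equivalences is again one, and the caveat that the ``modulo lower weight'' route would require a separate estimate for non-admissible brackets such as $[1,t_2,\dots,t_k]$) are all sound and only make the paper's sketch more complete.
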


Since $[w]\sim(1-q)^{-|w|}\zeta(w)$, $[v]\sim(1-q)^{-|v|}\zeta(v)$ as $q\to1^-$ and $[w\tuffle v]=[w]\cdot[v]$,
Theorem~\ref{th:T} asserts that the stuffle product \eqref{tuff} of the algebra $\MD$ reduces to the stuffle product of the algebra of MZVs
in the limit as $q\to1^-$. This fact has been already established in~\cite{BK13}.

\section{The duality}
\label{sect:D}

As an alternative extension of the mono-brackets \eqref{MB} we introduce the \emph{multiple $q$-zeta brackets}
\begin{align}
&
\bzeta\biggl[\begin{matrix} s_1,\dots,s_l \\ r_1,\dots,r_l \end{matrix}\biggr]
=\bzeta_q\biggl[\begin{matrix} s_1,\dots,s_l \\ r_1,\dots,r_l \end{matrix}\biggr]
\nonumber\\ &\qquad
:=c\sum_{\substack{m_1,\dots,m_l>0\\d_1,\dots,d_l>0}}
m_1^{r_1-1}d_1^{s_1-1}\dotsb m_l^{r_l-1}d_l^{s_l-1}q^{(m_1+\dots+m_l)d_1+(m_2+\dots+m_l)d_2+\dots+m_ld_l}
\nonumber\\ &\qquad\phantom:
=c\sum_{m_1,\dots,m_l>0}
\frac{m_1^{r_1-1}P_{s_1-1}(q^{m_1+\dots+m_l})m_2^{r_2-1}P_{s_2-1}(q^{m_2+\dots+m_l})\dotsb m_l^{r_l-1}P_{s_l-1}(q^{m_l})}
{(1-q^{m_1+\dots+m_l})^{s_1}(1-q^{m_2+\dots+m_l})^{s_2}\dotsb(1-q^{m_l})^{s_l}}
\label{TB}
\end{align}
where
$$
c=\frac1{(r_1-1)!\,(s_1-1)!\dotsb(r_l-1)!\,(s_l-1)!}.
$$
Then
$$
\biggl[\begin{matrix} s_1 \\ r_1-1 \end{matrix}\biggr]
=\bzeta\biggl[\begin{matrix} s_1 \\ r_1 \end{matrix}\biggr]
\qquad\text{and}\qquad
[s_1,\dots,s_l]
=\biggl[\begin{matrix} s_1,\dots,s_l \\ 0,\dots,0 \end{matrix}\biggr]
=\bzeta\biggl[\begin{matrix} s_1,\dots,s_l \\ 1,\dots,1 \end{matrix}\biggr].
$$

By applying iteratively the binomial theorem in the forms
$$
\frac{(m+n)^{r_1-1}}{(r_1-1)!}\,\frac{n^{r_2-1}}{(r_2-1)!}
=\sum_{j=1}^{r_1+r_2-1}\binom{j-1}{r_2-1}\frac{m^{r_1+r_2-j-1}}{(r_1+r_2-j-1)!}\,\frac{n^{j-1}}{(j-1)!}
$$
and
$$
\frac{(n-m)^{r-1}}{(r-1)!}
=\sum_{i=1}^r(-1)^{r+i}\frac{n^{i-1}}{(i-1)!}\,\frac{m^{r-i}}{(r-i)!}
$$
we see that the $\mathbb Q$-algebras spanned by either \eqref{BB} or \eqref{TB} coincide.
More precisely, the following formulae link the two versions of brackets.

\begin{proposition}
\label{prop2a}
We have
\begin{align*}
\biggl[\begin{array}{cccc} s_1, & s_2, & \dots, & s_l \\ r_1-1, & r_2-1, & \dots, & r_l-1 \end{array}\biggr]
&=\sum_{j_2=1}^{r_1+r_2-1}\binom{j_2-1}{r_2-1}\sum_{j_3=1}^{j_2+r_3-1}\binom{j_3-1}{r_3-1}\dotsb\sum_{j_l=1}^{j_{l-1}+r_l-1}\binom{j_l-1}{r_l-1}
\\ &\qquad\times
\bzeta\biggl[\begin{array}{ccccc} s_1, & s_2, & \dots, & s_{l-1}, & s_l \\ r_1+r_2-j_2, & j_2+r_3-j_3, & \dots, & j_{l-1}+r_l-j_l, & j_l \end{array}\biggr]
\end{align*}
and
\begin{align*}
&
\bzeta\biggl[\begin{matrix} s_1,\dots,s_l \\ r_1,\dots,r_l \end{matrix}\biggr]
=\sum_{i_1=1}^{r_1}\sum_{i_2=1}^{r_2}\dotsb\sum_{i_{l-1}=1}^{r_{l-1}}
(-1)^{r_1+\dots+r_{l-1}-i_1-\dotsb-i_{l-1}}
\\ &\qquad\times
\binom{r_1-i_1+i_2-1}{r_1-i_1}\dotsb
\binom{r_{l-2}-i_{l-2}+i_{l-1}-1}{r_{l-2}-i_{l-2}}\binom{r_{l-1}-i_{l-1}+r_l-1}{r_{l-1}-i_{l-1}}
\\ &\qquad\times
\biggl[\begin{array}{ccccc} s_1, & s_2, & \dots, & s_{l-1}, & s_l \\ i_1-1, & r_1-i_1+i_2-1, & \dots, & r_{l-2}-i_{l-2}+i_{l-1}-1, & r_{l-1}-i_{l-1}+r_l-1 \end{array}\biggr].
\end{align*}
\end{proposition}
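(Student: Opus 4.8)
The plan is to derive both identities by iterating the two displayed binomial expansions, using the elementary substitution $n_i=N_i:=m_i+m_{i+1}+\dots+m_l$ that converts the summation in \eqref{BB} into the one in \eqref{TB}: the constraint $n_1>\dots>n_l>0$ is equivalent to $m_1,\dots,m_l>0$, and the exponent $n_1d_1+\dots+n_ld_l$ becomes $(m_1+\dots+m_l)d_1+\dots+m_ld_l$ automatically. Since all the $q$-series involved converge absolutely for $|q|<1$, the finite binomial sums may be interchanged freely with the summations over $(m_i,d_i)$.

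\emph{First formula.} Rewrite $\bigl[\begin{smallmatrix} s_1,\dots,s_l\\ r_1-1,\dots,r_l-1\end{smallmatrix}\bigr]$ via $n_i=N_i$, redistributing the factorials so that the summand carries $\prod_{i=1}^l N_i^{r_i-1}/(r_i-1)!$ alongside $\prod_i d_i^{s_i-1}$ and the $q$-power. Apply the first binomial identity to the two factors at positions $1$ and $2$, with $m=m_1$ and $n=N_2$ (so that $N_1=m_1+N_2$): this replaces $\bigl(N_1^{r_1-1}/(r_1-1)!\bigr)\bigl(N_2^{r_2-1}/(r_2-1)!\bigr)$ by $\sum_{j_2=1}^{r_1+r_2-1}\binom{j_2-1}{r_2-1}\bigl(m_1^{r_1+r_2-j_2-1}/(r_1+r_2-j_2-1)!\bigr)\bigl(N_2^{j_2-1}/(j_2-1)!\bigr)$, freezing the exponent of $m_1$ and lowering that of $N_2$ to $j_2-1$. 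Iterate: for $k=3,\dots,l$ the current summand contains $\bigl(N_{k-1}^{j_{k-1}-1}/(j_{k-1}-1)!\bigr)\bigl(N_k^{r_k-1}/(r_k-1)!\bigr)$ (with the convention $j_1:=r_1$), and since $N_{k-1}=m_{k-1}+N_k$ the same identity with $m=m_{k-1}$, $n=N_k$ introduces the factor $\binom{j_k-1}{r_k-1}$, fixes the exponent of $m_{k-1}$ at $j_{k-1}+r_k-j_k-1$, and leaves $N_k^{j_k-1}/(j_k-1)!$. Each $m_i$ is thus frozen at exactly one step, and after step $l$ one has $N_l=m_l$ with exponent $j_l-1$; the resulting summand is precisely that of $\bzeta\bigl[\begin{smallmatrix} s_1,\dots,s_l\\ r_1+r_2-j_2,\,j_2+r_3-j_3,\,\dots,\,j_{l-1}+r_l-j_l,\,j_l\end{smallmatrix}\bigr]$ (each bottom-row entry being the corresponding $m_i$-exponent increased by one). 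Summing over $(m_i,d_i)$ and collecting the factors $\binom{j_k-1}{r_k-1}$ with their ranges $1\le j_k\le j_{k-1}+r_k-1$ gives the first formula.

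\emph{Second formula.} Here we go the other way. In \eqref{TB} write $m_i=N_i-N_{i+1}$ for $1\le i\le l-1$ and $m_l=N_l$, and apply the second binomial identity to each factor $m_i^{r_i-1}/(r_i-1)!$ with $i<l$, producing an independent sum over $i_k\in\{1,\dots,r_k\}$ and the sign $(-1)^{r_k+i_k}$. Collecting powers of the $N_k$: $N_1$ appears only in the expansion of $m_1$, with exponent $i_1-1$; each $N_k$ with $2\le k\le l-1$ appears in the expansions of both $m_{k-1}$ and $m_k$, with total exponent $(r_{k-1}-i_{k-1})+(i_k-1)$; and $N_l$ appears in the expansion of $m_{l-1}$ and as $m_l$ itself, with total exponent $(r_{l-1}-i_{l-1})+(r_l-1)$. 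For the bi-bracket \eqref{BB} whose bottom row is $\bigl(i_1-1,\,r_1-i_1+i_2-1,\,\dots,\,r_{l-2}-i_{l-2}+i_{l-1}-1,\,r_{l-1}-i_{l-1}+r_l-1\bigr)$ the accompanying reciprocal factorials $1/\bigl((i_1-1)!\,\prod_{k=2}^{l-1}(r_{k-1}-i_{k-1})!\,(i_k-1)!\cdot(r_{l-1}-i_{l-1})!\,(r_l-1)!\bigr)$ combine with the factorials of those total exponents to give exactly the binomial coefficients $\binom{r_k-i_k+i_{k+1}-1}{r_k-i_k}$ for $k=1,\dots,l-2$ together with the terminal $\binom{r_{l-1}-i_{l-1}+r_l-1}{r_{l-1}-i_{l-1}}$, while $\prod_{k=1}^{l-1}(-1)^{r_k+i_k}=(-1)^{r_1+\dots+r_{l-1}-i_1-\dots-i_{l-1}}$. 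Re-interpreting $n_k=N_k$ (hence $n_1>\dots>n_l>0$) identifies each term as the corresponding bi-bracket, which is the asserted expansion; as an aside, the two formulas are mutually inverse, giving a convenient consistency check.

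\emph{Main obstacle.} There is no analytic difficulty — the only substantive work is the combinatorial bookkeeping: for the first formula, applying the identity in the correct left-to-right order so that each partial sum $N_k$ is used up by exactly one further step; and for the second, verifying that the reciprocal factorials attached to the doubly-contributed $N_k$ reassemble into the stated binomial coefficients with the correct overall sign. Both amount to straightforward inductions on the depth $l$.
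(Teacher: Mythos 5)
Your proof is correct and is essentially the paper's own argument: the paper derives Proposition~\ref{prop2a} precisely by iterating the two displayed binomial identities under the substitution $n_i=m_i+\dots+m_l$, and merely states this without writing out the bookkeeping that you supply. The details you give (the order of application, the collection of exponents of the $N_k$, and the reassembly of the reciprocal factorials into the stated binomial coefficients and sign) all check out.
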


Proposition~\ref{prop2a} allows us to construct an isomorphism $\varphi$ of the two $\mathbb Q$-algebras $\mathbb Q\langle Z\rangle$ with two evaluation maps
$[\,\cdot\,]$ and $\bzeta[\,\cdot\,]$,
$$
\bzeta[z_{s_1,r_1}\dots z_{s_l,r_l}]=\bzeta\biggl[\begin{matrix} s_1,\dots,s_l \\ r_1,\dots,r_l \end{matrix}\biggr],
$$
such that
$$
[w]=\bzeta[\varphi w] \qquad\text{and}\qquad
\bzeta[w]=[\varphi^{-1}w].
$$
Note however that the isomorphism breaks the simplicity of defining the stuffle product $\tuffle$ from Section~\ref{sect:T}.

Another algebraic setup can be used for the $\mathbb Q$-algebra $\mathbb Q\langle Z\rangle$ with evaluation $\bzeta$.
We can recast it as the $\mathbb Q$-subalgebra $\fH^0:=\mathbb Q+x\fH y$ of the $\mathbb Q$-algebra $\fH:=\mathbb Q\langle x,y\rangle$
by setting $\bzeta[1]=1$ and
$$
\bzeta[x^{s_1}y^{r_1}\dots x^{s_l}y^{r_l}]=\bzeta\biggl[\begin{matrix} s_1,\dots,s_l \\ r_1,\dots,r_l \end{matrix}\biggr].
$$
The depth (or length) is defined as the number of appearances of the subword $xy$, while the weight is the number of letters minus the length.

\begin{proposition}[Duality]
\label{prop3}
$$
\bzeta\biggl[\begin{matrix} s_1,s_2,\dots,s_l \\ r_1,r_2,\dots,r_l \end{matrix}\biggr]
=\bzeta\biggl[\begin{matrix} r_l,r_{l-1},\dots,r_1 \\ s_l,s_{l-1},\dots,s_1 \end{matrix}\biggr].
$$
\end{proposition}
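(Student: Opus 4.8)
The plan is to prove the duality directly from the very definition of the brackets $\bzeta$, exhibiting a measure-preserving (in fact, lattice-point-preserving) bijection on the index set of the defining series. Recall the ``first form'' in \eqref{TB}:
$$
\bzeta\biggl[\begin{matrix} s_1,\dots,s_l \\ r_1,\dots,r_l \end{matrix}\biggr]
=c\sum_{\substack{m_1,\dots,m_l>0\\d_1,\dots,d_l>0}}
\prod_{i=1}^{l}m_i^{r_i-1}d_i^{s_i-1}\;
q^{\sum_{i=1}^{l}(m_i+m_{i+1}+\dots+m_l)\,d_i}.
$$
The key observation is that the exponent of $q$ is the pairing $\langle M,D\rangle$ of the ``partial-sum'' vector $M=(m_1+\dots+m_l,\,m_2+\dots+m_l,\,\dots,\,m_l)$ against $D=(d_1,\dots,d_l)$. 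First I would rewrite this exponent symmetrically: introducing the lower-triangular summation matrix and its transpose, one checks that $\sum_i (m_i+\dots+m_l)d_i = \sum_j m_j (d_1+\dots+d_j)$, so the same q-power equals $\langle m,\,(d_1,\,d_1+d_2,\,\dots,\,d_1+\dots+d_l)\rangle$. This already makes the roles of the $m$'s and the $d$'s symmetric, up to a reversal.

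Second, I would make the bijection explicit. Set $m_i' := d_{l+1-i}$ and $d_i' := m_{l+1-i}$ for $i=1,\dots,l$; equivalently, reverse both strings and swap the role of the two families of summation variables. Under this substitution the new ``staircase'' exponent $\sum_i (m_1'+\dots+m_i')\,d_i'$ is, after unravelling, exactly the old exponent $\sum_i (m_i+\dots+m_l)\,d_i$ — this is precisely the statement that reversing the order of summation in a double triangular sum transposes the staircase, combined with the reversal $i\mapsto l+1-i$. The monomial $\prod_i m_i^{r_i-1}d_i^{s_i-1}$ becomes $\prod_i (d_{l+1-i})^{r_i-1}(m_{l+1-i})^{s_i-1} = \prod_i (m_i')^{s_{l+1-i}-1}(d_i')^{r_{l+1-i}-1}$, and the normalising constant $c$ is invariant under permuting its factors. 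Reading off what series this is, one gets exactly
$$
c\sum_{\substack{m_1',\dots,m_l'>0\\d_1',\dots,d_l'>0}}
\prod_{i=1}^{l}(m_i')^{s_{l+1-i}-1}(d_i')^{r_{l+1-i}-1}\,
q^{\sum_i (m_1'+\dots+m_i')d_i'}
=\bzeta\biggl[\begin{matrix} r_l,\dots,r_1 \\ s_l,\dots,s_1 \end{matrix}\biggr],
$$
which is the claim.

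The main thing to get right — and where a careless argument would go wrong — is the bookkeeping of the staircase exponent under the simultaneous reversal and swap: one must verify that reversing the index $i\mapsto l+1-i$ in $q^{\sum (m_i+\dots+m_l)d_i}$ turns the ``upper'' partial sums of the $m$'s into ``lower'' partial sums of the new variables, so that the $\bzeta$-shape (lower-triangular staircase) is reproduced rather than some transposed-but-not-$\bzeta$ shape. Concretely, it is cleanest to first pass to the symmetric bilinear form $\sum_{i\le j} m_i d_j$ (summing over pairs $i\le j$ weighted by $m_i d_j$), observe that the map $(i,j)\mapsto(l+1-j,\,l+1-i)$ is an involution of the poset $\{i\le j\}$, and that it sends $m_i d_j$ to $m_{l+1-j}'\! \cdot\! d_{l+1-i}'$ with the variable names swapped; everything else (the product of powers, the constant $c$, the positivity constraints on all $2l$ summation variables) is then manifestly preserved termwise, and the series converge absolutely as formal power series in $q$ so rearrangement is legitimate. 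One could alternatively phrase this combinatorially via the ``partition/conjugation'' picture already alluded to in the introduction, but the direct change of variables is shortest and self-contained. I would also remark that the special case $l=1$ recovers the obvious symmetry $\bz{s_1\\r_1}=\bz{r_1\\s_1}$ visible from the first formula after \eqref{TB}, which is a useful sanity check.
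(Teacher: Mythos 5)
Your approach is exactly the paper's: the paper's entire proof is the substitution $d_i'=m_{l+1-i}$, $m_j'=d_{l+1-j}$ together with the identity $\sum_{i=1}^l d_i\sum_{j=i}^l m_j=\sum_{i=1}^l d_i'\sum_{j=i}^l m_j'$. However, the one step you yourself single out as ``the main thing to get right'' is transposed in your write-up. In the definition \eqref{TB} the variable $d_i$ is paired with the \emph{suffix} sums $m_i+\dots+m_l$, i.e.\ the exponent is $\sum_{i\le j}d_i m_j$ with the \emph{smaller} index on $d$; your ``symmetric bilinear form'' $\sum_{i\le j}m_i d_j$ is the transpose of this. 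Consequently the transformed exponent you display, $\sum_i(m_1'+\dots+m_i')d_i'$, is wrong on both counts: it is not equal to the original exponent (for $l=2$ it gives $m_1d_1+m_1d_2+m_2d_2$ in place of $m_1d_1+m_2d_1+m_2d_2$), and it is not of the $\bzeta$-shape either, so your final displayed series is not the definition of $\bzeta\bigl[\begin{smallmatrix} r_l,\dots,r_1\\ s_l,\dots,s_1\end{smallmatrix}\bigr]$.

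The fix is a pure transposition: under your (correct) substitution $m_i'=d_{l+1-i}$, $d_i'=m_{l+1-i}$, the original exponent $\sum_{i\le j}d_im_j$ becomes $\sum_{a\le b}d_a'm_b'=\sum_i(m_i'+\dots+m_l')d_i'$, which \emph{is} the $\bzeta$-staircase, and the monomial and constant bookkeeping you give are already correct. With that single correction your argument coincides verbatim with the paper's proof, so this is a slip in execution rather than a missing idea --- but it is a slip located precisely at the point you identified as the crux, so it needs to be repaired before the proof stands.
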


\begin{proof}
This follows from the rearrangement of the summation indices:
$$
\sum_{i=1}^ld_i\sum_{j=i}^lm_j
=\sum_{i=1}^ld_i'\sum_{j=i}^lm_j'
$$
where $d_i'=m_{l+1-i}$ and $m_j'=d_{l+1-j}$.
\end{proof}

Denote by $\tau$ the anti-automorphism of the algebra $\fH$, interchanging
$x$ and $y$; for example, $\tau(x^2yxy)=xyxy^2$. Clearly, $\tau$ is an involution
preserving both the weight and depth, and it is also an automorphism of the subalgebra $\fH^0$.
The duality can be then stated as
\begin{equation}
\bzeta[\tau w]=\bzeta[w]
\qquad\text{for any}\quad w\in\fH^0.
\label{eq:dual}
\end{equation}
We also extend $\tau$ to $\mathbb Q\langle Z\rangle$ by linearity.

The duality in Proposition~\ref{prop3} is exactly the partition duality given earlier by Bachmann for the model~\eqref{BB}.

\section{The dual stuffle product}
\label{sect:S}

We can now introduce the product which is dual to the stuffle one.
Namely, it is the duality composed with the stuffle product and, again, with the duality:
\begin{equation}
w\stuffle v:=\varphi^{-1}\tau(\tau\varphi w\tuffle\tau\varphi v)
\qquad\text{for}\quad w,v\in\mathbb Q\langle Z\rangle.
\label{shuff}
\end{equation}
It follows then from Propositions \ref{prop1} and \ref{prop3} that

\begin{proposition}
\label{prop4}
The evaluation map \eqref{evamap} on $\mathbb Q\langle Z\rangle$ satisfies
$[w\stuffle v]=[w]\cdot[v]$, so that it is also a homomorphism of the $\mathbb Q$-algebra $(\mathbb Q\langle Z\rangle,\stuffle)$ onto
$(\BD,\,\cdot\,)$.
\end{proposition}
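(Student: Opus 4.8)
The plan is to prove $[w\stuffle v]=[w]\cdot[v]$ by a purely formal diagram chase that combines Proposition~\ref{prop1} (multiplicativity of $[\,\cdot\,]$ for $\tuffle$), Proposition~\ref{prop3} (the $\tau$-duality of $\bzeta[\,\cdot\,]$), and the two intertwining relations $[w]=\bzeta[\varphi w]$, $\bzeta[w]=[\varphi^{-1}w]$. The first step is to isolate the involution $\sigma:=\varphi^{-1}\tau\varphi$ of $\mathbb Q\langle Z\rangle$, i.e. the partition duality of Proposition~\ref{prop3} transported to the bi-bracket side. It is an involution since $\tau^2=\mathrm{id}$, and, crucially, it is a symmetry of the evaluation $[\,\cdot\,]$:
\[
[\sigma w]=[\varphi^{-1}\tau\varphi w]=\bzeta[\tau\varphi w]=\bzeta[\varphi w]=[w],
\]
where the second equality is $\bzeta[\,\cdot\,]=[\varphi^{-1}(\,\cdot\,)]$, the third is Proposition~\ref{prop3} (extended to $\mathbb Q\langle Z\rangle$ by linearity), and the last is $[\,\cdot\,]=\bzeta[\varphi(\,\cdot\,)]$. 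One then checks that, once the $\varphi$'s in~\eqref{shuff} are unwound, the product $\stuffle$ is exactly the transport of $\tuffle$ along $\sigma$, namely $w\stuffle v=\sigma(\sigma w\tuffle\sigma v)$.

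Given these two observations the rest is one line:
\[
[w\stuffle v]=\bigl[\sigma(\sigma w\tuffle\sigma v)\bigr]=[\sigma w\tuffle\sigma v]=[\sigma w]\cdot[\sigma v]=[w]\cdot[v],
\]
where the second and fourth equalities use $[\sigma(\,\cdot\,)]=[\,\cdot\,]$ and the third is Proposition~\ref{prop1}. Hence $[\,\cdot\,]$ is an algebra homomorphism $(\mathbb Q\langle Z\rangle,\stuffle)\to(\BD,\,\cdot\,)$, and since it is already surjective onto $\BD$ by Proposition~\ref{prop1}, the proposition follows. Equivalently, one may run the chase without naming $\sigma$: apply $[\,\cdot\,]$ to~\eqref{shuff}, peel off the outer $\varphi^{-1}$ via $\bzeta[\,\cdot\,]=[\varphi^{-1}(\,\cdot\,)]$, drop the outer $\tau$ by Proposition~\ref{prop3}, invoke multiplicativity of $\bzeta[\,\cdot\,]$ for the stuffle product of the $\bzeta$-model, and finally restore the inner $\tau$'s and the $\varphi$'s using Proposition~\ref{prop3} and $[\,\cdot\,]=\bzeta[\varphi(\,\cdot\,)]$.

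The one genuinely delicate point is bookkeeping. Each intermediate expression lives in a definite copy of $\mathbb Q\langle Z\rangle$ with a definite product and a definite evaluation, and $\varphi$ intertwines the two \emph{evaluations} $[\,\cdot\,]\leftrightarrow\bzeta[\,\cdot\,]$ but does \emph{not} intertwine the two ``simple'' stuffle products\,---\,precisely the remark made before Proposition~\ref{prop3}. Consequently the ``$\tuffle$'' appearing inside~\eqref{shuff} has to be read as the stuffle product as seen in the $\bzeta$-model, i.e. $\varphi\circ\tuffle\circ(\varphi^{-1}\times\varphi^{-1})$; substituting this and cancelling $\varphi\varphi^{-1}$ is exactly what collapses~\eqref{shuff} to $\sigma(\sigma w\tuffle\sigma v)$ and legitimizes the displayed chain. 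One should also confirm that Proposition~\ref{prop3}, stated on $\fH^0$, applies to all the intermediate terms, which is immediate because $\tuffle$ and $\stuffle$ preserve $\mathbb Q\langle Z\rangle\cong\mathbb Q+x\fH y$ and $\tau$, $\varphi$ are automorphisms of it. No new combinatorial identity or asymptotic input is required: Proposition~\ref{prop4} is a formal consequence of Propositions~\ref{prop1},~\ref{prop2a} and~\ref{prop3}.
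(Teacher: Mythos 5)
Your argument is correct and is precisely the deduction the paper intends: Proposition~\ref{prop4} is stated there with the sole justification that it ``follows from Propositions~\ref{prop1} and~\ref{prop3}'', and your chain $[w\stuffle v]=[\sigma(\sigma w\tuffle\sigma v)]=[\sigma w\tuffle\sigma v]=[\sigma w]\cdot[\sigma v]=[w]\cdot[v]$ with $\sigma=\varphi^{-1}\tau\varphi$, resting on $[\sigma(\,\cdot\,)]=[\,\cdot\,]$ (via the intertwining relations and the duality \eqref{eq:dual}) and on Proposition~\ref{prop1}, is exactly that deduction written out. Your observation that the inner $\tuffle$ in~\eqref{shuff} must be read as the stuffle product transported to the $\bzeta$-model (so that \eqref{shuff} collapses to $\sigma(\sigma w\tuffle\sigma v)$) is the correct resolution of the only ambiguity in the definition, and is the reading under which the formal chase closes.
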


Note that \eqref{tuff1} is also equivalent to the expansion from the right \cite[Theorem~9]{Zud03}:
\begin{align}
wz_{s_1,r_1}\tuffle vz_{s_2,r_2}
&\equiv(w\tuffle vz_{s_2,r_2})z_{s_1,r_1}+(wz_{s_1,r_1}\tuffle v)z_{s_2,r_2}
\nonumber\\ &\qquad
+\binom{r_1+r_2-2}{r_1-1}(w\tuffle v)z_{s_1+s_2,r_1+r_2-1}.
\label{tuff2}
\end{align}

The next statement addresses the structure of the dual stuffle product \eqref{shuff}
for the words over the sub-alphabet $Z'=\{z_s=z_{s,1}:s=1,2,\dots\}\subset Z$. Note that
the words from $\mathbb Q\langle Z'\rangle$ can be also presented as the words from $\mathbb Q\langle x,xy\rangle$
necessarily ending with $xy$.

\begin{proposition}
\label{prop4a}
Modulo the highest weight and depth,
\begin{equation}
aw\stuffle bv
\equiv a(w\stuffle bv)+b(aw\stuffle v)
\label{shuff1}
\end{equation}
for arbitrary words $w,v\in\mathbb Q+\mathbb Q\langle x,xy\rangle xy$ and $a,b\in\{x,xy\}$.
\end{proposition}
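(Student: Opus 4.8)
The plan is to unwind the definition \eqref{shuff} of $\stuffle$ and to track everything only modulo terms of lower weight or lower depth. First I would observe that for a word $w\in\mathbb Q+\mathbb Q\langle x,xy\rangle xy$ in the sub-alphabet $Z'$, the isomorphism $\varphi$ from Proposition~\ref{prop2a} acts as the identity modulo the highest weight and depth: indeed, in the defining sums of Proposition~\ref{prop2a} the ``diagonal'' term (all summation indices $j_k$ or $i_k$ taking their extreme value, giving $r$'s equal to $1$) reproduces the same word, and every other term has a strictly smaller value of $r_1+\dots+r_l$, which for words from $\mathbb Q\langle Z'\rangle$ means strictly smaller depth. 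Hence $\varphi w\equiv w$ and $\varphi^{-1}w\equiv w$ modulo highest weight and depth, and the same holds for any $\mathbb Q$-linear combination. This reduces \eqref{shuff} to $w\stuffle v\equiv\tau(\tau w\tuffle\tau v)$ modulo highest weight and depth.

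Next I would apply Proposition~\ref{prop4a}'s input \eqref{tuff1}, or rather its mirror \eqref{tuff2}, to the stuffle product $\tau w\tuffle\tau v$. Writing $a,b\in\{x,xy\}$, the words $\tau(aw)$ and $\tau(bv)$ end in the letters $\tau(a),\tau(b)\in\{y,yx\}$; these are not of the form $z_{s,r}$, so one has to be a little careful, but the point is that modulo the highest weight (i.e.\ dropping the Bernoulli-number correction terms $\lambda_{s_1+s_2-j}z_{j,\ldots}$ in \eqref{diam}) the product $a\diamond b$ of two letters contributes only the single term $\binom{r_1+r_2-2}{r_1-1}z_{s_1+s_2,r_1+r_2-1}$, and for $a,b$ coming from $\{x,xy\}$ this combined letter has strictly larger weight or depth than what appears on the left. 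Thus from \eqref{tuff2} applied to $\tau v_0 z \tuffle \tau w_0 z'$ (after reversing words), the ``$\diamond$'' term is negligible modulo highest weight and depth, and we get $\tau w\tuffle\tau v\equiv (\tau a)(\,\cdot\,)+(\tau b)(\,\cdot\,)$ with the two inner stuffle products being $\tau w\tuffle\tau(bv)$ and $\tau(aw)\tuffle\tau v$ respectively.

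Then I would apply $\tau$ to this congruence. Since $\tau$ is an anti-automorphism fixing weight and depth, it turns a word beginning with the letter $\tau(a)$ (on the side where $\tau$ acts from the left on $\tau w\tuffle\cdots$) into a word ending with $a$; combined with the inductive shape of the recursion this is exactly the content of \eqref{shuff1}. More precisely, $\tau\bigl((\tau a)(\tau w\tuffle\tau(bv))\bigr)=\bigl(\tau(\tau w\tuffle\tau(bv))\bigr)a\equiv(w\stuffle bv)\,a\equiv a\,?$ — here one has to be slightly careful about left versus right insertion, and the cleanest route is to carry out the whole argument symmetrically, i.e.\ prove the ``expansion from the right'' version of \eqref{shuff1}, namely $wa\stuffle vb\equiv(w\stuffle vb)a+(wa\stuffle v)b$, which is the literal $\tau$-image of \eqref{tuff2} modulo highest weight and depth, and then note that \eqref{shuff1} follows from it by another application of $\tau$ (or is simply the mirror statement, which is what one actually wants for words ending in $xy$).

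The main obstacle I expect is bookkeeping rather than anything conceptual: one must check that \emph{every} discarded term — the correction terms in the product \eqref{diam}, the non-diagonal terms in the two formulae of Proposition~\ref{prop2a}, and the $\diamond$-term in \eqref{tuff2} — really does drop either the weight or the depth, and that these two gradings interact correctly (the product $\diamond$ raises weight by one while potentially lowering depth, so ``modulo highest weight \emph{and} depth'' must be interpreted as modulo the span of words of strictly smaller weight together with words of equal weight but strictly smaller depth). Once the filtration is set up precisely, the recursion closes immediately by induction on the total length of $w$ and $v$, with the base case $w\stuffle 1=1\stuffle w=w$ coming from $1\tuffle w=w$ and $\varphi 1=\tau 1=1$.
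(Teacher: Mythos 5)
Your overall strategy is the same as the paper's: reduce \eqref{shuff} to $\tau(\tau w\tuffle\tau v)$ using the triviality of $\varphi$ on $\mathbb Q\langle Z'\rangle$, expand the stuffle product from the right, discard the $\diamond$-terms, and conjugate back by $\tau$. But there is a genuine gap exactly at the point you flag and then walk past. After applying $\tau$ one has $\tau(aw)=(\tau w)\,\ol a$ with $\ol a\in\{y,xy\}$ (not $\{y,yx\}$), and when $\ol a=y$ the word $(\tau w)y$ does \emph{not} end in a new letter of $Z$: if $\tau w$ ends in $z_{s,r}$, then $(\tau w)y$ ends in $z_{s,r+1}$. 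The recursion \eqref{tuff2} peels off the last \emph{letter of $Z$}, so it cannot be used to peel off the symbol $y$; what is needed is the extension
$$
wa'\tuffle vb'\equiv(w\tuffle vb')a'+(wa'\tuffle v)b',\qquad a',b'\in Z\cup\{y\},
$$
which for $a'=y$ amounts to congruences such as
$$
wz_{s_1,r_1+1}\tuffle vz_{s_2,r_2}\equiv(wz_{s_1,r_1}\tuffle vz_{s_2,r_2})y+(wz_{s_1,r_1+1}\tuffle v)z_{s_2,r_2}.
$$
These are \emph{not} instances of \eqref{tuff2} with the $\diamond$-term dropped (the two sides differ by terms of the same weight and depth that have to be matched up separately), and establishing them is the actual content of the paper's proof. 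Your substitute discussion\,---\,that $a\diamond b$ contributes only a single merged letter which loses depth\,---\,handles the easy reduction (the first displayed congruence of the paper's proof) but not this one.

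Two further points. Your fallback plan of proving the right-expansion $wa\stuffle vb\equiv(w\stuffle vb)a+(wa\stuffle v)b$ as ``the literal $\tau$-image of \eqref{tuff2}'' and then recovering \eqref{shuff1} ``by another application of $\tau$'' does not work: $\tau$-conjugation converts the \emph{left}-expansion of $\stuffle$ into the \emph{right}-expansion of $\tuffle$ with trailing symbols in $\{y,xy\}$, not one $\stuffle$-expansion into the other, and the $\tau$-image of a right $\stuffle$-expansion would involve words beginning with $y$, which do not lie in $\fH^0$ at all. Also, your justification that $\varphi\equiv\mathrm{id}$ ``modulo lower depth'' is off: the non-diagonal terms in Proposition~\ref{prop2a} preserve both the weight and the number of letters (they merely redistribute the $r$-indices), so they are not lower-order terms. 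What saves the argument is that for words from $\mathbb Q\langle Z'\rangle$ all the sums collapse and $\varphi$ is \emph{exactly} the identity; one must then apply $\tau\varphi$ to both sides of \eqref{shuff}, as the paper does, rather than assert $\varphi^{-1}\equiv\mathrm{id}$ on the generally non-$Z'$ word $\tau(\tau w\tuffle\tau v)$.
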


\begin{proof}
First note that restricting \eqref{tuff2} further modulo the highest depth implies
\begin{align*}
wz_{s_1,r_1}\tuffle vz_{s_2,r_2}
&\equiv(w\tuffle vz_{s_2,r_2})z_{s_1,r_1}+(wz_{s_1,r_1}\tuffle v)z_{s_2,r_2},
\\ \intertext{and that we also have}
wz_{s_1,r_1+1}\tuffle vz_{s_2,r_2}
&\equiv(wz_{s_1,r_1}\tuffle vz_{s_2,r_2})y+(wz_{s_1,r_1+1}\tuffle v)z_{s_2,r_2},
\\
wz_{s_1,r_1+1}\tuffle vz_{s_2,r_2+1}
&\equiv(wz_{s_1,r_1}\tuffle vz_{s_2,r_2+1})y+(wz_{s_1,r_1+1}\tuffle vz_{s_2,r_2})y.
\end{align*}
The relations already show that
\begin{equation}
wa'\tuffle vb'
\equiv(w\tuffle vb')a'+(wa'\tuffle v)b'
\label{tuff3}
\end{equation}
for arbitrary words $w,v\in\mathbb Q+\mathbb Q\langle Z\rangle$ and $a',b'\in Z\cup\{y\}$,
where
$$
z_{s_1,r_1}\dots z_{s_{l-1},r_{l-1}}z_{s_l,r_l}y=z_{s_1,r_1}\dots z_{s_{l-1},r_{l-1}}z_{s_l,r_l+1}.
$$

Secondly note that the isomorphism $\varphi$ of Proposition~\ref{prop2a} acts trivially on the words from $\mathbb Q\langle Z'\rangle$.
Therefore, applying $\tau\varphi$ to the both sides of~\eqref{shuff} and extracting the homogeneous part of the result
corresponding to the highest weight and depth we arrive at
\begin{equation*}
\tau(w\stuffle v)\equiv\tau w\tuffle\tau v
\qquad\text{for all}\quad w,v\in\mathbb Q\langle Z'\rangle.
\end{equation*}
Denoting
$$
\ol a=\tau a=\begin{cases}
y & \text{if $a=x$}, \\
xy & \text{if $a=xy$},
\end{cases}
$$
and using \eqref{tuff3} we find out that
\begin{align*}
\tau(aw\stuffle bv)
&\equiv\tau(aw)\tuffle\tau(bv)
\equiv(\tau w)\ol a\tuffle(\tau v)\ol b
\\
&\equiv(\tau w\tuffle(\tau v)\ol b)\ol a+((\tau w)\ol a\tuffle\tau v)\ol b
\\
&\equiv(\tau w\tuffle\tau(bv))\ol a+(\tau(aw)\tuffle\tau v)\ol b
\equiv(\tau(w\stuffle bv))\ol a+(\tau(aw\stuffle v))\ol b
\\
&\equiv\tau(a(w\stuffle bv)+b(aw\stuffle v)),
\end{align*}
which implies the desired result.
\end{proof}

\begin{theorem}
\label{th:S}
For admissible words $w=z_{s_1}\dots z_{s_l}$ and $v=z_{s_1'}\dotsb z_{s_m'}$ of weight $|w|=s_1+\dots+s_l$ and $|v|=s_1'+\dots+s_m'$, respectively,
$$
[w\stuffle v]\sim(1-q)^{-|w|-|v|}\zeta(w\shuffle v)
\qquad\text{as}\quad q\to1^-,
$$
where $\shuffle$ denotes the standard shuffle product of MZVs on $\mathbb Q\langle Z'\rangle$.
\end{theorem}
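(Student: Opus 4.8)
Since Proposition~\ref{prop4} already gives $[w\stuffle v]=[w]\cdot[v]\sim(1-q)^{-|w|-|v|}\zeta(w)\zeta(v)$, the statement is, in one reading, nothing but the classical shuffle relation $\zeta(w)\zeta(v)=\zeta(w\shuffle v)$ for multiple zeta values. I would nonetheless prefer a proof that reads $\shuffle$ off the combinatorics of $\stuffle$ itself, thus recovering that relation from the $q$-side, and running parallel to the proof of Theorem~\ref{th:T} with Proposition~\ref{prop4a} in the role of \eqref{tuff1}. So I would first restrict to the sub-alphabet $Z'$ and realize $\mathbb Q\langle Z'\rangle$ inside $\fH$ through $z_s=x^{s-1}(xy)$, so that an admissible word becomes a word of length $|w|$ in the two ``letters'' $x$ and $xy$. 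Encoding $z_s$ as $X^{s-1}Y$ in \emph{any} two-letter alphabet $\{X,Y\}$ produces one and the same product on $\mathbb Q\langle Z'\rangle$, hence the combinatorial shuffle in the alphabet $\{x,xy\}$ coincides with the standard shuffle $\shuffle$ of MZVs, and \eqref{shuff1} is exactly the defining recursion for $\shuffle$ together with the unit $1\stuffle w=w=w\stuffle1$. An induction on $|w|+|v|$ then shows that the homogeneous component of $w\stuffle v$ of top weight $|w|+|v|$ and top depth $l+m$ equals $w\shuffle v$; and since $\shuffle$ preserves weight, depth and admissibility, this component is a $\mathbb Q$-combination of admissible words of weight $|w|+|v|$.

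The genuine difference from the proof of Theorem~\ref{th:T} is that \eqref{shuff1} discards terms not only of lower weight but also of lower depth, so I have to check that $w\stuffle v-w\shuffle v$ does not affect the leading radial behaviour. A word mapping under $[\,\cdot\,]$ to $\bigl[\begin{smallmatrix} s_1,\dots,s_k \\ r_1,\dots,r_k \end{smallmatrix}\bigr]$ has image of order $O\bigl((1-q)^{-(s_1+\dots+s_k)}\bigr)$ (by Proposition~\ref{prop2}, or by Lemma~\ref{lem1} in the degenerate cases) while its weight is $s_1+\dots+s_k+r_1+\dots+r_k$, so the terms of weight $<|w|+|v|$ contribute only at order $(1-q)^{-(|w|+|v|-1)}$. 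For the terms of full weight and depth $<l+m$ I would return to $w\stuffle v=\varphi^{-1}\tau(\tau w\tuffle\tau v)$, with $\varphi$ trivial on $\mathbb Q\langle Z'\rangle$: such a term has to come from at least one product $z_{1,a}\diamond z_{1,b}$ contributing its top-weight part $\binom{a+b-2}{a-1}z_{2,a+b-1}$ of \eqref{diam}; applying $\tau$ then yields a word in which the second indices sum to strictly more than the number of letters, and the formula of Proposition~\ref{prop2a} shows that $\varphi^{-1}$ preserves both the number of letters and the sum of the second indices. Hence, in the $[\,\cdot\,]$-coding, every such term is a bi-bracket $\bigl[\begin{smallmatrix} s_1,\dots,s_k \\ r_1,\dots,r_k \end{smallmatrix}\bigr]$ with $r_1+\dots+r_k\ge1$, so $s_1+\dots+s_k\le|w|+|v|-1$ and its contribution is again $O\bigl((1-q)^{-(|w|+|v|-1)}\bigr)$.

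Finally, every word $u$ occurring in $w\shuffle v$ is admissible of weight $|w|+|v|$, so Proposition~\ref{prop2} with all upper indices zero gives $[u]\sim(1-q)^{-|u|}\zeta(u)$; summing and using the previous paragraph, $[w\stuffle v]\sim[w\shuffle v]\sim(1-q)^{-|w|-|v|}\zeta(w\shuffle v)$, which is the assertion. The step I expect to be the real obstacle is the second paragraph, estimating the full-weight, lower-depth remainder: it rests on the (slightly counterintuitive) fact, already implicit in Lemma~\ref{lem1}, that the radial order of a bi-bracket is controlled by its upper indices $s_1,\dots,s_k$ alone and not by its weight, and on a careful audit of how $\varphi^{-1}\tau$ carries the $\diamond$-corrections of \eqref{diam} across the partition duality.
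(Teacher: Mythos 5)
Your argument is correct and is essentially the paper's own proof: Proposition~\ref{prop4a} identifies the top-weight, top-depth component of $w\stuffle v$ with $w\shuffle v$, and the remaining terms are shown to be negligible by tracing them through \eqref{shuff}, \eqref{tuff2} and Proposition~\ref{prop2a} to see that they are supported on monomials $z_{q_1,r_1}\dots z_{q_n,r_n}$ with $r_1+\dots+r_n=l+m>n$, hence with some $r_j\ge2$, so that Proposition~\ref{prop2} (or Lemma~\ref{lem1} in the degenerate cases) forces them to vanish against $(1-q)^{|w|+|v|}$; you merely spell out more explicitly the invariance of the letter count and of the sum of second indices, and the identification of the $\{x,xy\}$-shuffle with the standard one. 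The only blemish is terminological: in your final paragraph ``Proposition~\ref{prop2} with all upper indices zero'' should read ``with all lower indices $r_j$ equal to zero''.
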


\begin{proof}
Because both $\varphi$ and $\tau$ respect the weight,
Proposition~\ref{prop4a} shows that the only terms that can potentially interfere with the asymptotic behaviour as $q\to1^-$
correspond to the same weight but lower depth. However, according to \eqref{shuff} and \eqref{tuff2}, the `shorter' terms
do not belong to $\mathbb Q\langle Z'\rangle$, that is, they are linear combinations of the monomials
$z_{q_1,r_1}\dots z_{q_n,r_n}$ with $r_1+\dots+r_n=l+m>n$, hence $r_j\ge2$ for at least one $j$.
The latter circumstance and Proposition~\ref{prop2} then imply
\begin{equation*}
\lim_{q\to1^-}(1-q)^{|w|+|v|}[z_{q_1,r_1}\dots z_{q_n,r_n}]=0.
\qedhere
\end{equation*}
\end{proof}

Theorem~\ref{th:S} asserts that the dual stuffle product \eqref{shuff} restricted from $\BD$ to the subalgebra $\MD$
reduces to the shuffle product of the algebra of MZVs in the limit as $q\to1^-$. This result is implicitly stated in~\cite{Ba14}.
More is true: using \eqref{tuff1} and Proposition~\ref{prop4a} we obtain

\begin{theorem}
\label{th:TS}
For two words $w=z_{s_1}\dots z_{s_l}$ and $v=z_{s_1'}\dotsb z_{s_m'}$, not necessarily admissible,
$$
[w\tuffle v-w\stuffle v]\sim(1-q)^{-|w|-|v|}\zeta(w*v-w\shuffle v)
\qquad\text{as}\quad q\to1^-,
$$
whenever the MZV on the right-hand side makes sense.
\end{theorem}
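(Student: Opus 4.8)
The plan is to run the limiting arguments of Theorems~\ref{th:T} and~\ref{th:S} simultaneously on the single element $w\tuffle v-w\stuffle v$: first replace each of the two products by its MZV counterpart up to correction terms, and then show that, once multiplied by $(1-q)^{|w|+|v|}$, the corrections drop out as $q\to1^-$. To begin, I would record the shapes of the two products on $\mathbb Q\langle Z'\rangle$. Since there all lower indices equal~$1$, the binomial coefficient in~\eqref{diam} is~$1$ and $z_{s_1+s_2,r_1+r_2-1}=z_{s_1+s_2}$, so by~\eqref{tuff1} the product $\tuffle$ on $\mathbb Q\langle Z'\rangle$ obeys, modulo strictly smaller weight, precisely the recursion of the harmonic product~$*$; iterating, $w\tuffle v=w*v+R$ with $R\in\mathbb Q\langle Z'\rangle$ of weight $<|w|+|v|$ (the lower-weight part being produced only by the $z_{j,1}$-terms of~\eqref{diam} with $j<s_1+s_2$). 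Symmetrically, Proposition~\ref{prop4a} identifies $\stuffle$ on $\mathbb Q+\mathbb Q\langle x,xy\rangle xy$, modulo the highest weight \emph{and} depth, with the shuffle product~$\shuffle$; thus $w\stuffle v=w\shuffle v+R''+R'$, where $R'$ gathers the terms of weight $<|w|+|v|$ and $R''$ those of weight $|w|+|v|$ but depth $<l+m$. Following the argument in the proof of Theorem~\ref{th:S}, which unwinds~\eqref{shuff} through~\eqref{tuff2} and uses that $\varphi$ and $\tau$ preserve both weight and depth, one checks that every monomial $z_{q_1,r_1}\dots z_{q_n,r_n}$ of $R''$ satisfies $r_1+\dots+r_n=l+m>n$, so it carries at least one lower index $r_j\ge2$.

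Now I would subtract,
$$
w\tuffle v-w\stuffle v=(w*v-w\shuffle v)+(R-R')-R'',
$$
apply $[\,\cdot\,]$, multiply by $(1-q)^{|w|+|v|}$, and let $q\to1^-$ term by term. The hypothesis that $\zeta(w*v-w\shuffle v)$ makes sense means that, after cancellations, $w*v-w\shuffle v$ is a $\mathbb Q$-linear combination of \emph{admissible} words of weight $|w|+|v|$, so the limit relation~\eqref{lim1}, extended by linearity, gives $(1-q)^{|w|+|v|}[w*v-w\shuffle v]\to\zeta(w*v-w\shuffle v)$. The remainders $R$ and $R'$ have weight $<|w|+|v|$, and since a (bi-)bracket of weight $W$ grows no faster than $(1-q)^{-W}$ times a power of $\log\tfrac1{1-q}$ (which follows by iterating in the depth the one-variable asymptotics of Lemma~\ref{lem1}), the surplus power of $1-q$ sends $(1-q)^{|w|+|v|}[R-R']$ to~$0$. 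Finally, for each monomial of $R''$ the extra lower index $r_j\ge2$ lowers the order of growth, and Proposition~\ref{prop2}, used exactly as in the proof of Theorem~\ref{th:S}, yields $(1-q)^{|w|+|v|}[z_{q_1,r_1}\dots z_{q_n,r_n}]\to0$, the leading power of $1-q$ there being only $-(q_1+\dots+q_n)=-(|w|+|v|)+(l+m-n)$. Adding the three limits gives the claim.

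The delicate point is this last step: one must know that every full-weight correction monomial in $R''$, after the index shift of~\eqref{evamap}, lies in the range of applicability of Proposition~\ref{prop2} (roughly, that its first upper index strictly exceeds its first lower index while the remaining upper indices dominate the corresponding lower ones), so that none of them contributes its own non-vanishing, and in fact logarithmically divergent, term. This is exactly where the hypothesis that $\zeta(w*v-w\shuffle v)$ makes sense is used in an essential way; it is the analogue, at the level of correction terms, of the admissibility conditions in Theorems~\ref{th:T} and~\ref{th:S}. I would close with the remark that, by Propositions~\ref{prop1} and~\ref{prop4}, $[w\tuffle v]=[w]\cdot[v]=[w\stuffle v]$, so the left-hand side of Theorem~\ref{th:TS} is in fact identically zero; hence the theorem asserts that this $q$-deformation of the double stuffle picture reproduces, in the limit $q\to1^-$, the finite double shuffle relations $\zeta(w*v-w\shuffle v)=0$ among the multiple zeta values.
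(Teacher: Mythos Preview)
Your argument is correct and matches the paper's own (essentially implicit) proof: the paper merely states that Theorem~\ref{th:TS} follows ``using \eqref{tuff1} and Proposition~\ref{prop4a}'', i.e., by combining the mechanisms behind Theorems~\ref{th:T} and~\ref{th:S} exactly as you have spelled out. Your closing remark that $[w\tuffle v-w\stuffle v]\equiv0$ by Propositions~\ref{prop1} and~\ref{prop4}, so that the theorem recovers the (finite) double shuffle relations in the limit, is precisely the point the paper makes in the sentence following the theorem.
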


In other words, the $q$-zeta model of bi-brackets provides us with a (far reaching) regularisation of the MZVs:
the former includes the extended double shuffle relations as the limiting $q\to1^-$ case.

\begin{conjecture}[{Bachmann \cite{Ba14}}]
\label{conj1}
The resulting double stuffle (that is, stuffle and dual stuffle)
relations exhaust all the relations between the bi-brackets.
Equivalently (and simpler), the stuffle relations and the duality exhaust all the relations between the bi-brackets.
\end{conjecture}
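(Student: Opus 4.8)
The plan is to reformulate Conjecture~\ref{conj1} as an equality of Hilbert series and to split it into a part that looks reachable and a part that is the real difficulty. Write $\BD_{\le n}$ for the $\mathbb Q$-span of the bi-brackets of weight at most $n$, let $I$ be the ideal generated by the double stuffle relations $w\tuffle v-w\stuffle v$, and let $J\supseteq I$ be the ideal generated by the stuffle relations together with the (partition) duality relations of Proposition~\ref{prop3}; the inclusion $I\subseteq J$ is immediate because $\stuffle$ is \emph{defined} in \eqref{shuff} as a $\tau$-$\varphi$-conjugate of $\tuffle$. By Propositions~\ref{prop1}, \ref{prop3} and~\ref{prop4} both ideals lie in $\ker[\,\cdot\,]$, so with $D_n:=\dim\bigl(\mathbb Q\langle Z\rangle/I\bigr)_{\le n}$ (weight filtration) one has $d_n^{\BD}\le\dim\bigl(\mathbb Q\langle Z\rangle/J\bigr)_{\le n}\le D_n$. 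The conjecture's first form is $\ker[\,\cdot\,]=I$ and its ``simpler'' form is $\ker[\,\cdot\,]=J$; in view of $I\subseteq J\subseteq\ker[\,\cdot\,]$ these two are equivalent precisely when $I=J$, and either forces $d_n^{\BD}=D_n$ for all $n$.

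The first milestone, which should be attainable, is the identity $I=J$ — i.e.\ the ``Equivalently'' clause in the statement. The inclusion $I\subseteq J$ was noted above; conversely one must show that every partition duality relation lies in the ideal generated by the double stuffle relations. I would attempt this by induction on depth, using the explicit stuffle recursion \eqref{tuff} and~\eqref{diam} together with its right-handed analogue \cite{Zud03} and the anti-automorphism $\tau$, to peel letters from the ends of words and express a duality relation in terms of duality relations of smaller depth modulo $I$ — the same mechanism by which one derives further symmetry relations of MZVs from the extended double shuffle relations. A clean argument here would make ``double stuffle'' and ``stuffle $+$ duality'' interchangeable everywhere below and would settle the ``Equivalently'' clause unconditionally.

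Granting $I=J$, the content of the conjecture is the \emph{lower} bound $d_n^{\BD}\ge D_n$, equivalently $\ker[\,\cdot\,]\subseteq I$: one must exhibit enough $\mathbb Q$-linearly independent bi-brackets, and, the upper bound already being in hand, one first wants a closed form for $D_n$ — plausibly a rational generating function of the $1/(1-t^2-t^3)$ flavour of the MZV dimension conjecture, corrected by the quasimodular contribution already visible in the companion algebra $\MD$ of Bachmann and K\"uhn \cite{BK13}. For the independence itself the natural tool is the Mellin transform: as in Theorem~\ref{th1} one sends
\[
\biggl[\begin{matrix} s_1,\dots,s_l \\ r_1,\dots,r_l \end{matrix}\biggr]\longmapsto
\frac{\Gamma(s)}{r_1!\,(s_1-1)!\dotsb r_l!\,(s_l-1)!}\sum_{\substack{n_1>\dots>n_l>0\\d_1,\dots,d_l>0}}
\frac{n_1^{r_1}d_1^{s_1-1}\dotsb n_l^{r_l}d_l^{s_l-1}}{(n_1d_1+\dots+n_ld_l)^s}
\]
and tries to separate these multiple zeta functions by their meromorphic continuation, polar divisors and residues in the $s$-plane; the residues are themselves classical MZVs (by Proposition~\ref{prop2} and~\eqref{lim1}), so the aim would be to show that any $\mathbb Q$-linear relation among bi-brackets is forced, level by level in the depth, to be a consequence of the double stuffle relations, after first disposing of the depth-$1$ and depth-$2$ cases where the zeta functions above are essentially explicit.

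The main obstacle is exactly this lower bound. It is a linear-independence problem for $q$-series in the spirit of, and arguably at least as subtle as, Zagier's dimension conjecture for MZVs, so I expect it to be unreachable unconditionally; even a conditional argument — assuming, say, the MZV dimension conjecture together with Bachmann's conjectural identity $\BD=\MD$ — would need genuinely new input both to control the lower-depth ``error'' terms created by the products in \eqref{tuff1} and Proposition~\ref{prop4a} and to exclude sporadic relations coming from modular forms. Realistically, the attainable deliverable is a proof of $I=J$, and hence of the ``Equivalently'' clause, together with a conjectural formula for the Hilbert series $\sum_n D_n t^n$ verified numerically in low weight, with Conjecture~\ref{conj1} itself left open.
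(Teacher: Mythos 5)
There is nothing in the paper to compare your attempt against: the statement is Conjecture~\ref{conj1}, attributed to Bachmann, and the paper neither proves it nor claims to. Your proposal is likewise not a proof --- you say so explicitly in your last sentence --- so as a ``proof'' of the statement it has a gap that is the entire content of the statement, namely the lower bound $\ker[\,\cdot\,]\subseteq I$ (in your notation). That part of your write-up is a research programme, not an argument: the Mellin-transform separation of polar divisors is only carried out in the paper for depth~$1$ (Theorem~\ref{th1}, giving the quadratic lower bound $\lfloor(n+1)^2/4\rfloor$, far below the conjectural $D_n$), and nobody currently knows how to push it to higher depth, let alone to match the conjectural Hilbert series. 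Identifying the problem as ``at least as subtle as Zagier's dimension conjecture'' is accurate, and the honest conclusion is that the conjecture must simply be left as stated.

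Two smaller points on the parts you present as attainable. First, even the ``Equivalently'' clause is not proved in the paper --- it is asserted --- and your observation that it amounts to showing the duality relations lie in the span of the double stuffle relations (the nonobvious inclusion $J\subseteq I$, since $I\subseteq J$ is immediate from the definition \eqref{shuff}) is a fair reading, but your proposed depth induction is only a sketch; note that the degenerate case $v=1$ gives $w\tuffle 1-w\stuffle 1=0$, so the duality relations do not fall out of the double stuffle family for free and some genuinely new manipulation is needed. Second, be careful with the word ``ideal'': the relevant object is the $\mathbb Q$-linear span of the elements $w\tuffle v-w\stuffle v$ (or the $\tuffle$-ideal they generate inside $(\mathbb Q\langle Z\rangle,\tuffle)$, which does stay inside $\ker[\,\cdot\,]$ by Propositions~\ref{prop1} and~\ref{prop4}); an ideal with respect to the concatenation product would not obviously be contained in the kernel. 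With those caveats your framing of the conjecture as $\ker[\,\cdot\,]=I=J$ and $d_n^{\BD}=D_n$ is consistent with the paper's numerical evidence ($d_4^{\BD}=15$, $d_5^{\BD}=28$, $d_6^{\BD}=51$), but none of it constitutes a proof.
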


We would like to point out that the duality $\tau$ from Section~\ref{sect:D} also exists for the algebra of MZVs \cite[Section~6]{Zud03}.
However the two dualities are not at all related: the limiting $q\to1^-$ process squeezes the appearances of $x$ before $y$ in
the words $x^{s_1}yx^{s_2}y\dots x^{s_l}y$, so that they become $x^{s_1-1}yx^{s_2-1}y\dots x^{s_l-1}y$. Furthermore, the duality of MZVs
respects the shuffle product: the dual shuffle product coincides with the shuffle product itself. On the other hand,
the dual stuffle product of MZVs is very different from the stuffle (and shuffle) products. It may be an interesting
problem to understand the double stuffle relations of the algebra of MZVs.

\section{Reduction to mono-brackets}
\label{sect:R}

In this final section we present some observations towards another conjecture of Bachmann about the coincidence of the $\mathbb Q$-algebras of bi- and mono-brackets.

\begin{conjecture}[Bachmann]
\label{conj2}
$\MD=\BD$.
\end{conjecture}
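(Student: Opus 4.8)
The inclusion $\MD\subseteq\BD$ is clear, since $[s_1,\dots,s_l]=\bigl[\begin{smallmatrix}s_1,\dots,s_l\\0,\dots,0\end{smallmatrix}\bigr]$, so the content of Conjecture~\ref{conj2} is the reverse inclusion $\BD\subseteq\MD$, to which we now describe a possible approach. By Proposition~\ref{prop2a} one may argue with either the $[\,\cdot\,]$ or the $\bzeta[\,\cdot\,]$ model, passing between them freely. Two tools drive the argument. The first is the partition duality of Proposition~\ref{prop3}, which trades $r$-exponents for $s$-exponents. The second is the derivation $\d=q\,\frac{\d}{\d q}$: differentiating the defining $q$-series termwise pulls down the factor $\sum_i n_id_i$, which gives the clean identity
$$
\d\biggl[\begin{matrix} s_1,\dots,s_l \\ r_1,\dots,r_l \end{matrix}\biggr]
=\sum_{i=1}^l(r_i+1)\,s_i\biggl[\begin{matrix} s_1,\dots,s_i+1,\dots,s_l \\ r_1,\dots,r_i+1,\dots,r_l \end{matrix}\biggr],
$$
so $\d$ raises the weight by $2$ and shifts one pair $(s_i,r_i)$ diagonally. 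We also use that $\MD$ is closed under $\d$, which is among the results of~\cite{BK13,BK14}.

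In depth $1$ this already settles the conjecture. Iterating the displayed identity gives $\d^k[s]=k!\,s(s+1)\dotsb(s+k-1)\cdot\bigl[\begin{smallmatrix}s+k\\k\end{smallmatrix}\bigr]$, hence $\bigl[\begin{smallmatrix}a\\b\end{smallmatrix}\bigr]\in\MD$ for all $a>b\ge0$, since $[s]\in\MD$ and $\d\MD\subseteq\MD$; the depth-$1$ instance of the duality, $\bigl[\begin{smallmatrix}a\\b\end{smallmatrix}\bigr]=\bigl[\begin{smallmatrix}b+1\\a-1\end{smallmatrix}\bigr]$, then covers the remaining cases $a\le b$. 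For arbitrary depth the proposal is a double induction, on the depth $l$ and on $\sigma=r_1+\dots+r_l$, with base $\sigma=0$ (the mono-brackets). Given a bi-bracket of depth $l$ with $\sigma\ge1$, one would first apply the duality to reach a representative having some component with $r_i\ge1$ and $s_i\ge2$, and then solve the diagonal-shift identity for that component: the target bi-bracket equals $\d$ of the bi-bracket obtained by lowering the $i$-th pair to $(s_i-1,r_i-1)$, minus the other $l-1$ terms of that derivative, all divided by $r_i(s_i-1)$. One would like the right-hand side to lie in $\MD$ by the inductive hypothesis, reinforced by the depth-$1$ mechanism of the previous sentence.

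The hard part is precisely that last step. Since $\d$ is a derivation, in depth $\ge2$ it never raises a single pair $(s_i,r_i)$ in isolation, and the correction terms indexed by $j\neq i$ carry the \emph{same} value of $\sigma$, not a smaller one, so the naive induction does not close. What is really needed is a component-wise raising operator $\bigl[\begin{smallmatrix}s_1,\dots,s_l\\r_1,\dots,r_l\end{smallmatrix}\bigr]\mapsto\bigl[\begin{smallmatrix}\dots,s_i+1,\dots\\\dots,r_i+1,\dots\end{smallmatrix}\bigr]$, together with a proof that it\,---\,or some combination of the stuffle product~\eqref{tuff1}, the dual stuffle product, and the duality that simulates it\,---\,preserves $\MD$. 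A concrete line of attack is to feed $\d$ into stuffle \emph{products} rather than single brackets: the expansion of $\d\bigl([s_1,\dots][t_1,\dots]\bigr)$ via~\eqref{tuff1} is a large $\mathbb Q$-combination of bi-brackets whose part of top weight and depth is a \emph{single} new unknown, and solving these linear systems would express each bi-bracket through $\MD$ together with data of strictly lower depth or lower $\sigma$. Proving that such systems are always solvable over $\mathbb Q$\,---\,rather than occasionally producing a genuinely new element\,---\,is where the real difficulty resides, and is why the statement remains a conjecture; the dimension-counting alternative, comparing the lower bound $\lfloor(n+1)^2/4\rfloor$ for $\dim\BD$ from Theorem~\ref{th1} against an upper bound for $\dim\MD$, is not available either, since no nontrivial upper bound for $\dim\MD$ is presently known.
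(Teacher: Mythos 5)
The statement you were asked to prove is a conjecture: the paper does not prove $\MD=\BD$, and neither do you, which you state candidly. There is therefore no proof in the paper to compare yours against; what the paper offers instead is purely experimental evidence\,---\,explicit linear relations (checked on 500 terms of the $q$-expansions) expressing the non-self-reducible weight-$\le4$ brackets such as $\bz{2\\2}$, $\bz{3\\2}$, $\bz{2,1\\2,1}$ through elements of $\MD$, together with the dimension counts $d_n^{\BD}=d_n^{\MD}$ for $n\le6$. Your proposal is a structural attempt rather than a numerical one, and its verifiable parts are sound: the identity $\d\bigl[\begin{smallmatrix}s_1,\dots,s_l\\r_1,\dots,r_l\end{smallmatrix}\bigr]=\sum_i(r_i+1)s_i\bigl[\begin{smallmatrix}\dots,s_i+1,\dots\\\dots,r_i+1,\dots\end{smallmatrix}\bigr]$ is a correct termwise computation, the iteration $\d^k[s]=k!\,s(s+1)\dotsb(s+k-1)\bigl[\begin{smallmatrix}s+k\\k\end{smallmatrix}\bigr]$ combined with the depth-one duality $\bigl[\begin{smallmatrix}a\\b\end{smallmatrix}\bigr]=\bigl[\begin{smallmatrix}b+1\\a-1\end{smallmatrix}\bigr]$ does settle depth one (granting the Bachmann--K\"uhn result that $\MD$ is stable under $q\,\d/\d q$, which you correctly cite rather than prove), and your diagnosis of why the induction on $\sigma=r_1+\dots+r_l$ fails in depth $\ge2$\,---\,the derivation never raises a single pair in isolation, and the correction terms sit at the same value of $\sigma$\,---\,is exactly the obstruction. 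Be careful only with presentation: a reader skimming your second paragraph could mistake the double induction for a completed argument before reaching the retraction in the third; it would be better to flag from the outset that only the depth-one case is actually established. As written, your text is a reasonable discussion of the conjecture, not a proof of it, and should be labelled as such.
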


Based on the representation of the elements from $\BD$ as the polynomials from $\mathbb Q\langle x,y\rangle$
(see also the last paragraph of Section~\ref{sect:S}), we can loosely interpret this conjecture for the algebra of MZVs
as follows: all MZVs lie in the $\mathbb Q$-span of
$$
\zeta(s_1,s_2,\dots,s_l)=\zeta(x^{s_1-1}yx^{s_2-1}y\dots x^{s_l-1}y)
$$
with all $s_j$ to be at least~2 (so that there is no appearance of $y^r$ with $r\ge2$). The latter statement
is already known to be true: Brown~\cite{Bro12} proves that one can span the $\mathbb Q$-algebra of MZVs by the set with all $s_j\in\{2,3\}$.

\medskip
In what follows we analyse the relations for the model \eqref{TB}, because it makes simpler keeping track of the duality relation.
We point out from the very beginning that the linear relations given below are all experimentally found
(with the check of 500 terms in the corresponding $q$-expansions) but we believe that it is possible to establish
them rigorously using the double stuffle relations given above.

The first presence of the $q$-zeta brackets that are not reduced to ones from $\MD$ by the duality relation happens in weight~3.
It is $\bz{2\\2}$ and we find out that
$$
\bz{2\\2}
=\tfrac12\bz{2\\1}+\bz{3\\1}-\bz{2,1\\1,1}.
$$
There are 34 totally $q$-zeta brackets of weight up to~4,
\begin{gather*}
\bz{}^*, \; \bz{1\\1}^*, \; \bz{2\\1}=\bz{1\\2}, \; \bz{2\\2}^*, \; \bz{3\\1}=\bz{1\\3}, \; \bz{3\\2}=\bz{2\\3}, \; \bz{4\\1}=\bz{1\\4},
\\
\bz{1,1\\1,1}^*, \; \bz{2,1\\1,1}=\bz{1,1\\1,2}, \; \bz{1,2\\1,1}=\bz{1,1\\2,1}, \;
\bz{2,1\\2,1}=\bz{1,2\\1,2}, \; \bz{2,1\\1,2}^*, \; \bz{1,2\\2,1}^*,
\\
\bz{2,2\\1,1}=\bz{1,1\\2,2}, \; \bz{3,1\\1,1}=\bz{1,1\\1,3}, \; \bz{1,3\\1,1}=\bz{1,1\\3,1},
\\
\bz{1,1,1\\1,1,1}^*, \; \bz{2,1,1\\1,1,1}=\bz{1,1,1\\1,1,2}, \;
\bz{1,2,1\\1,1,1}=\bz{1,1,1\\1,2,1}, \; \bz{1,1,2\\1,1,1}=\bz{1,1,1\\2,1,1}, \;
\bz{1,1,1,1\\1,1,1,1}^*,
\end{gather*}
where the asterisk marks the self-dual ones. Only 21 of those listed are not dual-equivalent, and only five of the latter are not reduced to
the $q$-zeta brackets from $\MD$; besides the already mentioned $\bz{2\\2}$ these are $\bz{3\\2}$, $\bz{2,1\\2,1}$, $\bz{2,1\\1,2}$ and $\bz{1,2\\2,1}$. We find out that
\begin{align*}
\bz{3\\2}
&=\tfrac14\bz{2\\1}+\tfrac32\bz{4\\1}-2\bz{2,2\\1,1},
\displaybreak[2]\\
\bz{2,1\\2,1}
&=\bz{2,1\\1,1}+\tfrac12\bz{1,2\\1,1}-\bz{2,2\\1,1}+\bz{1,3\\1,1}-\bz{2,1,1\\1,1,1}-\bz{1,2,1\\1,1,1},
\displaybreak[2]\\
\bz{2,1\\1,2}
&=-\tfrac12\bz{2,1\\1,1}-\tfrac12\bz{1,2\\1,1}+2\bz{2,2\\1,1}+\bz{3,1\\1,1}-\bz{1,3\\1,1}+\bz{1,2,1\\1,1,1},
\displaybreak[2]\\
\bz{1,2\\2,1}
&=-\bz{2,1\\1,1}+2\bz{2,2\\1,1}+\bz{2,1,1\\1,1,1},
\end{align*}
and there is one more relation in this weight between the $q$-zeta brackets from $\MD$:
$$
\tfrac13\bz{2\\1}-\bz{3\\1}+\bz{4\\1}-2\bz{2,2\\1,1}+2\bz{3,1\\1,1}=0.
$$
The computation implies that the dimension $d_4^{\BD}$ of the $\mathbb Q$-space spanned by all multiple $q$-zeta brackets of weight not more than~4 is equal
to the dimension $d_4^{\MD}$ of the $\mathbb Q$-space spanned by all such brackets from $\MD$ and that both are equal to~15.
A similar analysis demonstrates that
$$
d_5^{\BD}=d_5^{\MD}=28 \quad\text{and}\quad d_6^{\BD}=d_6^{\MD}=51,
$$
and it seems less realistic to compute and verify that $d_n^{\BD}=d_n^{\MD}$ for $n\ge7$ though Conjecture~\ref{conj2}
and \cite[Conjecture (5.4)]{BK13} support
\begin{align*}
\sum_{n=0}^\infty d_n^{\MD}x^n
&\overset?=\frac{1-x^2+x^4}{(1-x)^2(1-2x^2-2x^3)}
\\
&=1+2x+4x^2+8x^3+15x^4+28x^5+51x^6+92x^7
\\ &\quad
+165x^8+294x^9+523x^{10}+O(x^{11}).
\end{align*}
We can compare this with the count $c_n^{\MD}$ and $c_n^{\BD}$ of all mono- and bi-brackets of weight $\le n$,
$$
\sum_{n=0}^\infty c_n^{\MD}x^n=\frac1{1-2x}
\quad\text{and}\quad
\sum_{n=0}^\infty c_n^{\BD}x^n=\frac{1-x}{1-3x+x^2}=\sum_{n=0}^\infty F_{2n}x^n,
$$
where $F_n$ denotes the Fibonacci sequence.

In addition, we would like to point out one more expectation for the algebra of (both mono- and bi-) brackets, which is not shared
by other $q$-models of MZVs: all linear (hence algebraic) relations between them seem to be over $\mathbb Q$, not over $\mathbb C(q)$.

\begin{conjecture}
\label{conj3}
A collection of \textup(bi-\textup)brackets is linearly dependent over $\mathbb C(q)$ if and only if
it is linearly dependent over~$\mathbb Q$.
\end{conjecture}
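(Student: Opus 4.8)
The statement has an easy direction and a hard one. Since $\mathbb Q\subset\mathbb C(q)$, every $\mathbb Q$-linear relation among a collection of brackets is \emph{a fortiori} a relation over $\mathbb C(q)$, which proves the ``if'' part. For the ``only if'' part it suffices to show that some $\mathbb Q$-basis $\{b_\alpha\}$ of the bi-bracket algebra $\BD$ remains linearly independent over $\mathbb C(q)$ (this also covers the mono-brackets, since $\MD\subset\BD$). Indeed, given such a basis and brackets $v_1,\dots,v_r$ that are linearly dependent over $\mathbb C(q)$, expand each $v_j=\sum_\alpha q_{j\alpha}b_\alpha$ with $q_{j\alpha}\in\mathbb Q$; the $\mathbb C(q)$-relation among the $v_j$ then forces the \emph{rational} coefficient vectors $(q_{j\alpha})_\alpha$ to be linearly dependent over $\mathbb C(q)$, hence already over $\mathbb Q$ (rational vectors have the same rank over every extension field), and this produces the required $\mathbb Q$-relation among the $v_j$.

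The first step is a routine descent from $\mathbb C(q)$ to $\mathbb Q(q)$. Suppose $\sum_{i=1}^n P_i(q)\,b_i=0$ with brackets $b_i$ and $P_i\in\mathbb C(q)$ not all zero. The $b_i$ are holomorphic on $|q|<1$, so after clearing denominators we may take $P_i\in\mathbb C[q]$ of degree at most some fixed $D$. Writing $P_i(q)=\sum_{k=0}^{D}c_{i,k}q^k$ and $b_i=\sum_{m\ge0}\beta_{i,m}q^m$ with $\beta_{i,m}\in\mathbb Q$, the identity becomes an infinite homogeneous linear system in the unknowns $c_{i,k}$ with coefficients $\beta_{i,m}\in\mathbb Q$; having a nonzero solution over $\mathbb C$, it has one over $\mathbb Q$. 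Hence the $P_i$ may be chosen in $\mathbb Q[q]$, so $\mathbb Q(q)$-linear independence of a $\mathbb Q$-basis of $\BD$ already implies its $\mathbb C(q)$-linear independence; equivalently, Conjecture~\ref{conj3} amounts to the injectivity of the natural map $\mathbb Q(q)\otimes_{\mathbb Q}\BD\to\operatorname{Frac}(\mathbb Q[[q]])$.

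What remains, and what I expect to be the main obstacle, is the passage from $\mathbb Q(q)$-linear independence to $\mathbb Q$-linear independence: from a relation $\sum_i P_i(q)\,b_i=0$ with $P_i\in\mathbb Q[q]$ not all constant one has to extract a nontrivial $\mathbb Q$-linear relation among brackets. Here the specific structure of $\BD$ must be used, and I see two natural routes. The first is algebraic, via the weight-raising derivation $\mathfrak d=q\,\d/\d q$, which maps $\BD$ into itself and sends a bracket of weight $w$ to a $\mathbb Q$-combination of brackets of weight $w+2$ (this is visible from $\mathfrak d\,q^{n_1d_1+\dots+n_ld_l}=(n_1d_1+\dots+n_ld_l)\,q^{n_1d_1+\dots+n_ld_l}$, which raises one of the $s_j$ and the corresponding $r_j$ by one each): applying $\mathfrak d$ repeatedly to the relation one replaces the $P_i$ by $\mathfrak d^{m}P_i$ while spawning further brackets carrying lower-degree companions, and one hopes, by a finiteness argument on weights, to isolate a relation with constant coefficients. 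The second is analytic: evaluate the $\mathbb Q(q)$-relation radially at each primitive root of unity $\zeta$ by means of the asymptotic expansions of Section~\ref{sect:A} (the root-of-unity analogue of Lemma~\ref{lem1}); each such limit constrains the principal parts of the $P_i$ through the principal parts of the brackets, and letting $\zeta$ range over all roots of unity should force $\deg P_i=0$. In either case the crux is the same: to make the argument rigorous one must control the $\mathbb Q[q]$-module structure of $\BD$ very precisely, that is, know that the brackets satisfy only their ``defining'' relations\,---\,essentially the content of Conjecture~\ref{conj1}. I therefore expect a complete proof of Conjecture~\ref{conj3} to proceed hand in hand with an understanding of the relation ideal of the bracket algebra.
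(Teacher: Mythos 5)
The statement you have been asked to prove is Conjecture~\ref{conj3} of the paper: it is presented there as an open conjecture, supported only by numerical experiments, and the paper contains no proof of it. So there is no ``paper proof'' to compare against, and your write-up is, by your own admission, not a proof but a reduction. The parts you actually establish are correct. The ``if'' direction is immediate from $\mathbb Q\subset\mathbb C(q)$. The reduction of the ``only if'' direction to the $\mathbb C(q)$-linear independence of a $\mathbb Q$-basis of $\BD$ is sound, because the rank of a matrix with rational entries does not change under field extension, so a $\mathbb C(q)$-dependence among brackets whose coordinate vectors with respect to such a basis are rational forces a $\mathbb Q$-dependence of those vectors. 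The descent from $\mathbb C(q)$ to $\mathbb Q(q)$ is also fine: after clearing denominators, comparing coefficients of powers of $q$ turns a relation $\sum_i P_i(q)\,b_i=0$ into a homogeneous linear system in the finitely many unknown coefficients of the $P_i$ with entries in $\mathbb Q$, and such a system has a nontrivial rational solution whenever it has a nontrivial complex one.

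The genuine gap is the step you flag yourself: passing from a relation with coefficients in $\mathbb Q[q]$ to one with constant coefficients. That step is not a technicality\,---\,it \emph{is} the conjecture, and neither of your two sketched routes closes it. Iterating the derivation $q\,\d/\d q$ produces new relations but in higher weight, and extracting a constant-coefficient relation from the resulting family requires knowing that $\BD$ has no ``accidental'' relations beyond the double stuffle ones, which is essentially Conjecture~\ref{conj1}; the asymptotic route at roots of unity likewise only constrains finitely many coefficients of the $P_i$ per root of unity and needs a uniformity argument that is nowhere justified. You are candid that the argument is incomplete, which is the right assessment: what you have is a correct and useful reformulation (injectivity of $\mathbb Q(q)\otimes_{\mathbb Q}\BD\to\operatorname{Frac}(\mathbb Q[[q]])$), not a proof, and the paper itself offers none.
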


\begin{acknowledgements}
I have greatly benefited from discussing this work with Henrik Bachmann, Kurusch Ebrahimi-Fard, Herbert Gangl and Ulf K\"uhn\,---\,it is my pleasure
to thank them for numerous clarifications, explanations and hints. I thank the three anonymous referees of the journal
for pointing out some typos in the preliminary version and helping to improve the exposition.
I would also like to acknowledge that a part of this research was undertaken in ICMAT\,---\,Institute of Mathematical Sciences
(Universidad Aut\'onoma de Madrid, Spain)
during the Research Trimester on \emph{Multiple Zeta Values, Multiple Polylogarithms, and Quantum Field Theory}
(September--December 2014).
\end{acknowledgements}


\end{document}